\newtheorem{theorem}{Theorem}[section]
\newtheorem{cor}[theorem]{Corollary}
\newtheorem{lem}[theorem]{Lemma}
\newtheorem{Proposition}[theorem]{Proposition}
\newtheorem{Definition}[theorem]{Definition}
\newtheorem{rem}[theorem]{Remark}
\newtheorem{ex}[theorem]{Example}
\newcommand{\ra}{\rightarrow}
\newcommand{\ds}{\displaystyle}
\newcommand{\cX}{{\mathcal X}}
\newcommand{\T}{{\mathbb T}}%
\newcommand{\R}{\mathbb R}%
\newcommand{\C}{\mathbb C}%
\newcommand{\Z}{\mathbb Z}%
\newcommand{\tree}{{\mathbf T}}
\newcommand{\path}{{\mathbf P}}
\newcommand{\1}{\mathbf 1}
\title{Combinatorial Heat and Wave Equations on Certain Classes of Infinite Cayley and Coset Graphs}
\author{S. Mohanty$^*$ \quad and \quad A. K. Lal\footnote{Department of Mathematics and Statistics,
IIT Kanpur,
Kanpur, India - 208016. \newline \indent Emails:
sumitmth@iitk.ac.in, \quad arlal@iitk.ac.in  \quad FAX No:
91-512-2597500. }}
\date{}
\begin{document}
\maketitle


\begin{abstract}
The combinatorial heat and wave equations  on all finite Cayley and coset graphs
with discrete time variable was solved by Lal {\it et al}. In this paper,
the results of the above paper are extended for infinite Cayley and coset graphs,  whenever
the associated groups  are discrete, abelian and finitely generated. Furthermore,
we study the solution of the combinatorial heat and wave equations on a $k$-regular
tree whose associated group is a non-abelian free group on $k$  generators, each
of order~$2$. It turns out that in case of Cayley graphs the solutions to combinatorial heat and wave equations
are  weighted sum of the initial functions  over balls of certain radius which are dependent
on the discrete time variable.

\end{abstract}
\noindent {\sc\textsl{Keywords}:} combinatorial Laplacian, combinatorial heat equation, combinatorial wave equation, Cayley graph,  $k$-regular tree.

\section{Introduction}

Let $G=(V(G),E(G))$, in short $(V,E)$, be  an undirected connected  graph without loops or multiple edges, with $V$ as the set of vertices and $E$
as the set of edges in $G.$  We write $x\sim y$ to indicate that an
undirected edge $\{x,y\}\in E$, {\it i.e.}, the vertices $x,y \in V$ are adjacent in $G.$  The degree of the vertex $x,$ denoted by $m(x),$ is the number of vertices in $V$  that are adjacent to $x.$ A graph is  said to be $k$-regular if $m(x)=k$ for all $x \in V$ and is called  locally finite if $m(x)<\infty$ for all $x\in V$.

A connected graph $G$  without  loops or multiple edges is a metric space with respect to the metric $d_G(x,y)$, the length
of the shortest path from $x$ to $y$ for all  $x, y \in G$.

Let $G=(V,E)$ be a connected locally finite graph. For $1\leq p<\infty,$ let us consider the normed linear space
$ L^p(V)= \{\ f:V\ra \C  : \ \|f\|_{L^p(V)} < \infty \}$, where
$\|f\|_{L^p(V)}= \left(\sum_{x\in V} |f(x)|^p \right)^{\frac{1}{p}}.$
Note that, for  $p=2$, $L^2(V)$ is  a Hilbert space with
$\left<f, g\right>_{L^2(V)} \ = \sum_{x\in   V}f(x)\overline{g(x)}$ as its inner product. Given a function $f: V\ra \C$, the combinatorial Laplacian operator on $G$ is defined by
\begin{equation}\label{eqn:laplacian}
\Delta_G f(x) = m(x)f(x)-\sum_{y\sim x}f(y) \ \mbox{for all}  \  x\in V.
\end{equation}
Observe  that $\Delta_G $ is bounded on $L^2(V)$ if and only if $m(x)$ is uniformly bounded. In case $G$ is a finite graph, the operator $\Delta_G$
represents a $|V|\times |V|$ matrix (where $|S|$ denotes the cardinality of the set $S$), called the  Laplacian matrix of $G$. It is well known that $\Delta_G$ is a positive semi-definite matrix
with $0$ as the smallest eigenvalue. Moreover, if
$0=\lambda_1(G)\leq \lambda_2(G)\leq\cdots \leq \lambda_{|V|}(G)$
are the eigenvalues of $\Delta_G$ then  $\lambda_2(G)>0$
if and only if $G$ is connected (for  details   see~\cite{Bapat}).

\begin{rem}\label{rem:choice-defn-lap}
Recall that, if we define the classical Laplacian operator on $\R^n$  by $\Delta f= -\sum_{i=1}^{n}\frac{\partial^2f}{\partial x_i^2}$ then its eigenvalues $0=\lambda_1\leq \lambda_2\leq \cdots \rightarrow \infty$ form a discrete subset (with multiplicities) of $\R_+=\{x\in \R: x\geq 0\}$
(for details see~\cite[pages~$192$-$195$]{Chavel1}). Therefore
from the previous paragraph it is clear that $\Delta_G$ may be
viewed as the discrete analogue of  the  classical Laplacian $\Delta$.
\end{rem}

We now recall the definition of  a Cayley and coset graph.

\begin{Definition}[\textbf{Cayley Graph}]
Let $\Gamma$ be a  group with $e_{\Gamma}$ as its identity element. Let $S \subset \Gamma$  such that  $S$ generates $\Gamma$
($\langle S\rangle=\Gamma$), $S=S^{-1},  e_{\Gamma} \notin S$ and $|S| < \infty$. The Cayley graph $Cay(\Gamma,S)$ on $\Gamma$ with respect to $S$ has $V=\Gamma$  and  $E= \left \{ \{x, xs\}\ :  x \in \Gamma, s \in S \right\}$.
\end{Definition}

\begin{rem}\label{rem:cayley1}
Cayley graphs as defined above are $|S|$-regular,  undirected, connected and have no loops
nor multiple edges (see~\cite{lauri:scap}). The metric $d_G$ on the Cayley graph, $G = Cay(\Gamma,S)$,   is given by
$$d_G(x,y)= \min  \{ r  : \ y=x s_{i_1} s_{i_2} \cdots
s_{i_r}, \ \text{where} \ s_{i_1}, s_{i_2},\ldots, s_{i_r} \in S\}. $$
\end{rem}
\begin{Definition}[\textbf{Coset Graph}]
Let $\Gamma$ be a group, $H$  a subgroup of $\Gamma$ and $S$ be a subset of $\Gamma$
such that $S\subseteq \Gamma \setminus H,$ $S^{-1}= S$ and $H\cup S$ generates $\Gamma.$
The coset graph $G= {\mbox{Coset }}(\Gamma,H, S)$ on $\Gamma$ with respect to $H$ and $S$
is defined to be the graph having $V(G)$ as the set of all distinct left cosets of $H$
in $\Gamma$ and $E(G)= \left \{ \{xH,xHs\}\  : \ x\in \Gamma \ , \ s \in S \right\},$
{\it i.e.}, for  $xH, yH \in V(G),  xH\neq yH,$ $xH\sim yH$ if  $x^{-1}y \in H S H.$
\end{Definition}
Observations similar to that of Cayley graphs in Remark~\ref{rem:cayley1} can also be
made for the coset graphs. The next remark is an important observation on coset graphs.

\begin{rem}\label{rem:coset1}
In the above definition of a coset graph, the set $S$ may have the property that
$H s_i = H s_j$ for two distinct elements $s_i, s_j \in S$. In this case, the
contribution of $s_i$ and $s_j$ to the edge set of the coset graph remains the same.
Therefore, from the set $S$, we extract a set $\widetilde{S}$ such that the elements
of $\widetilde{S}$ give all the distinct right cosets of $H$ in $S$.
\end{rem}

We now  define the difference operator, the discrete time analogue of the differentiation operator.
\begin{Definition}\label{def:diff-opr}
Let $\Z_+=\{0,1,2, \ldots\}.$ Then, for any complex valued function $v:\Z_+\rightarrow \C,$
let $$\partial_n v(n)=v(n+1)-v(n).$$
\end{Definition}

Let  $G=(V,E)$ be a Cayley  graph. Then, in this paper, we are interested in solving the combinatorial heat equation on $G$ given by\begin{equation}\label{eqn:heat-graph}
\begin{split}
\Delta_{G} u(x,n) + & \partial_n u(x,n) =0 {\mbox{ on }} V(G)\times \Z_+,\\
u(x,0)&=f(x),
\end{split}
\end{equation}
and the combinatorial wave equation on $G$ given by
\begin{equation}\label{eqn:wave-graph}
\begin{split}
\Delta_{G} u(x,n)+ &\partial_n^2 u(x,n) =0 {\mbox{ on }} V(G) \times \Z_+,\\
u(x,0)&=f(x),\quad  \partial_n   u(x,0)=g(x). \\
\end{split}
\end{equation}

The above  equations were first studied in~\cite{dragomir} for Hamming graphs
on the vertex set $\Z_2^{N}$. The results of~\cite{dragomir} related to the
above equations were generalized for all finite Cayley and coset graphs ({\it i.e.},
for all finite vertex transitive graphs) in~\cite{Lal}. Note that
both~\cite{dragomir} and~\cite{Lal}  used the theory of  Fourier analysis
on finite groups to solve these equations.

In this paper, we use techniques from  Fourier analysis on locally
compact groups, discussed in Section~\ref{sec:fourier},   to extend
the results of~\cite{Lal} to solve the combinatorial heat and wave
equations on infinite Cayley and coset graphs whenever the associated groups
are discrete, abelian and finitely generated (see~Sections~\ref{infi-cayley} and~\ref{infi-coset}).
Finally, in Section~\ref{sec:reg-tree}, we  also solve the combinatorial
heat and wave equations on $k$-regular trees which is a Cayley graph having
a non-abelian free group on $k$ generators, each of order $2$, as its associated group.

\subsection{Fourier Transform on Locally Compact Abelian Groups } \label{sec:fourier}
A group $\Gamma$ is said to be a locally compact abelian group if $\Gamma$ is an
abelian group and  there exists a topology on $\Gamma$  with respect to which  $\Gamma$
is locally compact. An important result  on locally compact
abelian groups is stated next.

\begin{Proposition}\cite{Folland1,Rudin}\label{prop:haar-mre}
Let $\Gamma$ be a locally compact abelian group. Then
there exists a nonnegative
regular {\it translation-invariant} measure $m_{\Gamma}$, called the
{\it Haar measure} on $\Gamma$, {\it i.e.}, for every $x\in \Gamma$
and every Borel set $Y$ in $\Gamma$, one has $m_{\Gamma}(xY)=m_{\Gamma}(Y)$.
\end{Proposition}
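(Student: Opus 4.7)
The plan is to construct $m_\Gamma$ via the classical Weil--Cartan covering argument. First, using local compactness I fix a compact set $K_0 \subset \Gamma$ with nonempty interior to serve as a normalization reference. For any compact $A \subset \Gamma$ and any open nonempty $V \subset \Gamma$, I define the covering number $(A:V)$ as the smallest $n$ such that $A$ is covered by $n$ translates $x_1 V, \ldots, x_n V$; this is finite by compactness of $A$, and is translation-invariant in $A$ since translating a cover of $A$ by $g$ produces a cover of $gA$ of the same size. Setting $I_V(A) = (A:V)/(K_0:V)$ yields a set function that is monotone, subadditive, translation-invariant in $A$, satisfies $I_V(A) \le (A:K_0)$, and has $I_V(K_0) = 1$.

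Second, I pass to the limit as $V$ shrinks to $\{e_\Gamma\}$. The product space $X = \prod_{A} [0, (A:K_0)]$, indexed over all compact $A \subset \Gamma$, is compact Hausdorff by Tychonoff's theorem, and each $I_V$ corresponds to a point of $X$. The family of open neighborhoods of $e_\Gamma$ is directed by reverse inclusion, so along this net I extract a cluster point $\mu$ of $V \mapsto (I_V(A))_A$. The set function $\mu$ inherits monotonicity, subadditivity, translation invariance, finiteness on compacts, and $\mu(K_0) = 1$ directly from $I_V$. The crucial additional property, additivity $\mu(A \cup B) = \mu(A) + \mu(B)$ for disjoint compact $A, B$, is obtained from the fact that in any Hausdorff topological group there exists an open neighborhood $W$ of $e_\Gamma$ such that no single set of the form $xW$ meets both $A$ and $B$; hence for every open $V \subset W$ one has $(A \cup B : V) = (A : V) + (B : V)$, an equality which passes to the cluster point $\mu$.

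Finally, I extend the content $\mu$ from compact sets to a Borel measure by setting $m_\Gamma(U) = \sup\{\mu(A) : A \text{ compact}, A \subset U\}$ for open $U$, and then $m_\Gamma(E) = \inf\{m_\Gamma(U) : U \text{ open}, U \supset E\}$ for arbitrary Borel $E$. A standard Carath\'eodory-style verification then shows $m_\Gamma$ is a regular, translation-invariant Borel measure, and it is nonzero since $m_\Gamma(K_0) \ge \mu(K_0) = 1$. The main obstacle in the whole argument is the additivity step for $\mu$: subadditivity and monotonicity transfer to the limit without effort, but additivity requires the separation argument for disjoint compacts and is genuinely the assertion that the net of inequalities is eventually an equality, rather than merely a limit of inequalities. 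I note that abelianness of $\Gamma$ is not actually used in the existence proof (the same construction produces a left Haar measure on any locally compact Hausdorff group), but it makes left- and right-invariance automatically coincide, which is what is needed for the Fourier-analytic applications later in the paper.
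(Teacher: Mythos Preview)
Your argument is the standard Weil--Cartan construction of Haar measure and is essentially correct; the one notational slip is that $(A:K_0)$ is not defined under your own convention (you defined $(A:V)$ only for open $V$), so the uniform bound should read $I_V(A)\le (A:K_0^{\circ})$, using the nonempty interior of $K_0$. With that correction the compactness, cluster-point, additivity, and Carath\'eodory extension steps go through exactly as you describe.

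However, there is nothing to compare against: the paper does not prove this proposition at all. It is stated with citations to Folland and Rudin and used as a black box, since the paper's contribution lies in applying Fourier analysis on discrete abelian groups to combinatorial PDEs, not in re-establishing the foundations of abstract harmonic analysis. So your proposal supplies a genuine proof where the paper simply invokes the literature; that is fine as an exercise, but for the purposes of this paper a one-line reference is all that is expected.
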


Now, recall that a {\it character} of a locally compact abelian group $\Gamma$ is a continuous
group homomorphism $\gamma : \Gamma \rightarrow \mathbb{T}$, where $\mathbb{T}$ is the unit circle in $\C$.  Then, the characters of $\Gamma$ form an abelian  group $\widehat{\Gamma}$, called the dual group of $\Gamma$,  with binary operation $(\gamma_1\circ\gamma_2)(x)=\gamma_1(x)\gamma_2(x)$ and the trivial character $\gamma_0$ as its identity element.
Furthermore, there exists a topology on $\widehat{\Gamma}$ with respect to which $\widehat{\Gamma}$ is locally compact. Hence,
using Proposition~\ref{prop:haar-mre}, there exists a Haar measure $m_{\widehat{\Gamma}}$
on $\widehat{\Gamma}$. For more details on the dual group $\widehat{\Gamma}$ and
its topology, the readers can refer to~\cite{Folland1,Rudin,Sugiura}.
We now state a result which gives an interesting relation between the
topology of $\Gamma$ and that of $\widehat{\Gamma}$.

\begin{Proposition}\cite{Rudin}\label{prop:dis-comp}
If $\Gamma$ is a discrete abelian group then the dual group $\widehat{\Gamma}$  is a compact abelian group. In case $\Gamma$ is a compact abelian group then $\widehat{\Gamma}$
is a discrete  abelian group.
\end{Proposition}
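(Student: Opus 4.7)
The plan is to make the topology on $\widehat{\Gamma}$ explicit (the compact--open topology, equivalently uniform convergence on compact subsets of $\Gamma$) and treat the two implications separately.

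For the first implication, suppose $\Gamma$ is discrete. Then every group homomorphism $\gamma:\Gamma\to\mathbb T$ is automatically continuous, so $\widehat{\Gamma}\subseteq \mathbb T^{\Gamma}$. Moreover, because compact subsets of a discrete space are finite, the compact--open topology on $\widehat{\Gamma}$ coincides with the subspace topology inherited from the product (pointwise-convergence) topology on $\mathbb T^{\Gamma}$. First I would invoke Tychonoff's theorem to conclude that $\mathbb T^{\Gamma}$ is compact. Next I would write $\widehat{\Gamma}=\bigcap_{x,y\in\Gamma}\{f\in\mathbb T^{\Gamma}:\,f(xy)=f(x)f(y)\}$ and observe that each set in the intersection is closed, since the evaluation maps $f\mapsto f(x),\ f\mapsto f(y),\ f\mapsto f(xy)$ are continuous and $\mathbb T$ is Hausdorff. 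Thus $\widehat{\Gamma}$ is a closed subset of a compact space, hence compact.

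For the second implication, suppose $\Gamma$ is compact. Since $\widehat{\Gamma}$ is a topological group under pointwise multiplication, it suffices to exhibit an open neighbourhood of the trivial character $\gamma_0$ that contains only $\gamma_0$; translates will then show every singleton is open. A subbasic open neighbourhood of $\gamma_0$ in the compact--open topology has the form $W(K,U)=\{\gamma\in\widehat{\Gamma}:\gamma(K)\subseteq U\}$, with $K\subseteq\Gamma$ compact and $U\subseteq\mathbb T$ open containing $1$. The decisive move is to take $K=\Gamma$ (which is legal precisely because $\Gamma$ is compact) and to choose $U$ small enough to contain no non-trivial subgroup of $\mathbb T$; the open arc $U=\{e^{i\theta}:|\theta|<\pi/2\}$ works. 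For any $\gamma\in W(\Gamma,U)$, the image $\gamma(\Gamma)$ is a subgroup of $\mathbb T$ contained in $U$, and hence must be $\{1\}$, forcing $\gamma=\gamma_0$.

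If there is a genuine obstacle, it is only the bookkeeping step of verifying that the (abstract) locally compact topology on $\widehat{\Gamma}$ referenced in the paragraph preceding the proposition is indeed the compact--open topology; once this is fixed, both halves become short. The essential ingredients are then Tychonoff's theorem (first half) and the fact that $\mathbb T$ has no small subgroups (second half).
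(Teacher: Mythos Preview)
The paper does not supply a proof of this proposition; it is quoted from Rudin's \emph{Fourier Analysis on Groups} with the citation \cite{Rudin} and used as a black box. So there is no in-paper argument to compare against.

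Your proof is correct and is essentially the standard textbook argument (indeed it is the one found in Rudin and Folland). The first half---embedding $\widehat{\Gamma}$ as a closed subset of the Tychonoff cube $\mathbb T^{\Gamma}$ once pointwise and compact--open convergence agree for discrete $\Gamma$---is exactly the usual route; so is the second half, which exploits that $\mathbb T$ has ``no small subgroups'' to isolate the trivial character by the compact--open neighbourhood $W(\Gamma,U)$. Your caveat is well placed: the only point that needs care is that the dual-group topology the paper alludes to is indeed the compact--open topology (equivalently, the topology of uniform convergence on compacta), which is how Rudin defines it. Once that identification is made, both implications go through as you wrote them.
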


\begin{rem}\label{rem:dis-comp-mre}
Given a discrete abelian group $\Gamma$, the corresponding Haar measure
$m_{\Gamma}$ is a counting measure. The dual group $\widehat{\Gamma}$ is a
compact group and hence  the corresponding Haar measure $m_{\widehat{\Gamma}}$
on $\widehat{\Gamma}$  is a finite measure, {\it i.e.}, $m_{\widehat{\Gamma}}(\widehat{\Gamma})<\infty$.
Hence, for $1\le p<\infty,$

$$L^p(\Gamma)=\{f: \Gamma\rightarrow \C :\|f\|_{L^p(\Gamma)}<\infty\}, \;\;\;\; L^p(\widehat{\Gamma})
  =\{f: \widehat{\Gamma}\rightarrow \C :\|f\|_{L^p(\widehat{\Gamma})}<\infty\},$$
where
$\|f\|_{L^p(\Gamma)}=\left(\sum\limits_{x\in \Gamma}|f(x)|^p\right)^{1/p}$
and
$ \|f\|_{L^p(\widehat{\Gamma})}= \left(\frac{1}{m_{\widehat{\Gamma}}(\widehat{\Gamma})}
        \int_{\widehat{\Gamma}} |f(\gamma)|^p \  dm_{\widehat{\Gamma}}(\gamma)\right)^{1/p}.$
\end{rem}

We are now ready to  state  a well known result which will be used  subsequently.
\begin{Proposition}\cite[page~$10$]{Rudin}\label{prop:char-orthogonal}
Let $\Gamma$ be a discrete abelian group. Then
$$\frac{1}{m_{\widehat{\Gamma}}(\widehat{\Gamma})}
                      \int_{\widehat{\Gamma}} \gamma(x) \  dm_{\widehat{\Gamma}}(\gamma)
                             =\left\{\begin{array}{ll}
                                            0, & {\mbox{ if }}  x\neq e_{\Gamma}, \\
                                            1, &  {\mbox{ if }}  x = e_{\Gamma}.
                                      \end{array}\right.$$
\end{Proposition}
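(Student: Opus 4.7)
The plan is to separate the two cases and reduce the nontrivial case to a translation-invariance argument, which is the standard technique for orthogonality relations of characters.

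First I would dispose of the easy case $x=e_{\Gamma}$. Since every character $\gamma$ is a homomorphism, $\gamma(e_{\Gamma})=1$ for all $\gamma\in\widehat{\Gamma}$, so the integrand is constantly $1$ and the normalized integral equals $1$.

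For the case $x\neq e_{\Gamma}$, write
\[
I(x) \;=\; \int_{\widehat{\Gamma}} \gamma(x)\, dm_{\widehat{\Gamma}}(\gamma),
\]
and look for a character $\gamma_0\in\widehat{\Gamma}$ with $\gamma_0(x)\neq 1$. Assuming such a $\gamma_0$ exists, the translation-invariance of the Haar measure $m_{\widehat{\Gamma}}$ on the compact abelian group $\widehat{\Gamma}$ (guaranteed by Propositions~\ref{prop:haar-mre} and~\ref{prop:dis-comp}) gives
\[
I(x) \;=\; \int_{\widehat{\Gamma}} (\gamma_0\circ\gamma)(x)\, dm_{\widehat{\Gamma}}(\gamma) \;=\; \gamma_0(x)\int_{\widehat{\Gamma}} \gamma(x)\, dm_{\widehat{\Gamma}}(\gamma) \;=\; \gamma_0(x)\, I(x),
\]
where the first equality is the invariance of $m_{\widehat{\Gamma}}$ under the translation $\gamma\mapsto \gamma_0\circ\gamma$ and the second uses the pointwise definition of the group operation in $\widehat{\Gamma}$. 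Since $\gamma_0(x)\neq 1$, this forces $I(x)=0$, and dividing by $m_{\widehat{\Gamma}}(\widehat{\Gamma})<\infty$ yields the claim.

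The one step that is not purely formal is the existence of a character $\gamma_0$ with $\gamma_0(x)\neq 1$; this is the \emph{separation of points} property of $\widehat{\Gamma}$ for a discrete abelian $\Gamma$. For this I would cite the Pontryagin duality results available in~\cite{Folland1,Rudin,Sugiura}: for any locally compact abelian group the characters separate points, and for a discrete abelian $\Gamma$ this can also be seen by decomposing finitely generated subgroups via the structure theorem and extending characters from cyclic factors to all of $\Gamma$ using divisibility of $\mathbb{T}$. This separation step is the one genuine obstacle; once it is in hand the translation-invariance trick finishes the proof immediately.
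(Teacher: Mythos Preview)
Your argument is correct and is precisely the standard proof of this orthogonality relation. Note, however, that the paper does not supply its own proof of this proposition: it is stated with a direct citation to \cite[page~$10$]{Rudin} and used as a black box thereafter. So there is no ``paper's proof'' to compare against; what you have written is essentially the argument one finds in Rudin, with the separation-of-points step correctly identified as the only substantive ingredient.
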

With the above background, we recall the following definitions.

\begin{Definition}\label{defn:fourier-dis-gr}
Let $\Gamma$ be a discrete abelian group. Then, the Fourier transform of $f\in L^{1}(\Gamma)$ is given by
$$(\mathfrak{F} f)(\gamma)=\widehat{f}(\gamma)=\sum_{x\in \Gamma}f(x)\gamma(x) \;\; {\mbox{ for each character }} \gamma.$$
Further, if  $\widehat{f}\in L^1(\widehat{\Gamma})$ then,  its inverse Fourier transform is given by
$$\mathfrak{F^{-1}}(\widehat{f})(x)= f(x)= \frac{1}{m_{\widehat{\Gamma}}(\widehat{\Gamma})}
    \int_{\widehat{\Gamma}}\widehat{f}(\gamma)\gamma(x^{-1}) \  dm_{\widehat{\Gamma}}(\gamma) {\mbox{ for each }} x \in \Gamma.$$
\end{Definition}

\begin{rem}\label{rem:density}
Recall that $L^{1}(\Gamma)$ is dense in $L^{2}(\Gamma)$ whenever $\Gamma$ is a discrete abelian group. Further, by Plancherel Theorem~\cite{Rudin}, the notion of Fourier transform can be extended to $L^{2}(\Gamma)$ using the density argument.
\end{rem}

Further recall that for  a  compact abelian
group $\Gamma$ and for a finite number of characters
$\gamma_1,\gamma_2,\ldots,\gamma_t \in \widehat{\Gamma}$,  a function of the form $ P(x)=\sum_{i=1}^{t} b_i \gamma_i(x),$ where $ x\in \Gamma$ and $b_i \in \C$, is called a trigonometric polynomial on $\Gamma$. Now we state a result that  is an application of  the Stone-Weierstrass theorem.

\begin{Proposition}\cite{Rudin}\label{prop:stone}
Let $\Gamma$ be a  compact abelian group. Then the trigonometric polynomials on $\Gamma$ form a dense subalgebra of $\mathcal{C}(\Gamma)$, the set of all
continuous functions on~$\Gamma$.
\end{Proposition}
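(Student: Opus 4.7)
The plan is to apply the Stone--Weierstrass theorem to the set $\mathcal{A}$ of trigonometric polynomials on $\Gamma$. Since $\Gamma$ is compact and Hausdorff (topological groups are Hausdorff once one-point sets are closed, which is automatic here), it suffices to verify that $\mathcal{A}$ is a subalgebra of $\mathcal{C}(\Gamma)$ that contains the constants, is closed under complex conjugation, and separates the points of $\Gamma$. If all four hold, Stone--Weierstrass immediately gives uniform density of $\mathcal{A}$ in $\mathcal{C}(\Gamma)$.

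First I would verify the algebraic conditions, which are routine. Because $\widehat{\Gamma}$ is a group under pointwise multiplication, the product $\gamma_i\gamma_j$ of two characters is again a character; distributing a product of two trigonometric polynomials across the sums shows that $\mathcal{A}$ is closed under multiplication, while closure under sums and scalar multiples is immediate from the definition. The trivial character $\gamma_0\equiv 1$ lies in $\widehat{\Gamma}$, so every constant function belongs to $\mathcal{A}$. For conjugation, each $\gamma:\Gamma\to\T$ takes values of modulus one, so $\overline{\gamma(x)}=\gamma(x)^{-1}=\gamma^{-1}(x)$, and $\gamma^{-1}\in\widehat{\Gamma}$; hence $\overline{P}\in\mathcal{A}$ whenever $P\in\mathcal{A}$.

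The main obstacle is the separation of points, which is the only genuinely non-trivial ingredient. I need to produce, for each pair $x\neq y$ in $\Gamma$, a character $\gamma$ with $\gamma(x)\neq\gamma(y)$, or equivalently, for each $z\neq e_{\Gamma}$ a character with $\gamma(z)\neq 1$. This is precisely the statement that characters separate points of a compact abelian group. I would derive it from the Peter--Weyl theorem: a compact group admits a complete family of finite-dimensional irreducible unitary representations whose matrix coefficients are dense in $L^2(\Gamma)$; in the abelian case Schur's lemma forces these irreducibles to be one-dimensional, so the matrix coefficients are exactly the characters. If two distinct points $x,y\in\Gamma$ were not separated by any character, then every element of $L^2(\Gamma)$ would agree on $x$ and $y$, which is absurd. (Alternatively, one can invoke Pontryagin duality, which states that the canonical map $\Gamma\to\widehat{\widehat{\Gamma}}$ is an isomorphism and in particular injective, giving the same conclusion.)

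With the four hypotheses in hand, Stone--Weierstrass completes the argument: $\mathcal{A}$ is uniformly dense in $\mathcal{C}(\Gamma)$. I expect the algebraic bookkeeping to be entirely mechanical; essentially all the mathematical content sits in the point-separation step, which hinges on a nontrivial structural fact about compact abelian groups.
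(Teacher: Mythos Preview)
Your argument is correct and is precisely the approach the paper alludes to: the paper gives no proof of its own, citing Rudin and introducing the proposition as ``an application of the Stone--Weierstrass theorem.'' The only minor quibble is that your $L^2$-density phrasing for point-separation is a bit loose (elements of $L^2$ are equivalence classes, not pointwise-defined functions), but your alternative via the injectivity of $\Gamma \to \widehat{\widehat{\Gamma}}$ in Pontryagin duality is exactly the needed statement and closes the gap.
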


\begin{rem}\label{rem:hat-f-cont}
Note that if $\Gamma$ is a discrete abelian group then by Proposition~\ref{prop:dis-comp},
the dual group $\widehat{\Gamma}$ is compact. Thus,  Pontryagin Duality Theorem (see~\cite[page~$27$]{Rudin}) implies  that the trigonometric polynomials on $\widehat{\Gamma}$ are of the
form $P(\gamma)=\sum_{i=1}^{n} b_i \gamma(x_i)$, where $x_i \in \Gamma$,
for $1 \le i \le n$. If $f\in L^{1}(\Gamma)$ then by Proposition~\ref{prop:stone},
 $\sum_{x\in \Gamma}f(x)\gamma(x)$, the Fourier
transform of $f$,  converges uniformly to $\widehat{f}$. Hence $\widehat{f}$ is  continuous on $\widehat{\Gamma}$ and  $\widehat{f}\in L^{1}(\widehat{\Gamma})$ as  $\widehat{\Gamma}$ is compact.
\end{rem}

\begin{ex}
For $1\le p<\infty$ and  $\Gamma=\Z$, recall that
$L^p(\Z)=\{f: \Z\rightarrow \C :\|f\|_{L^p(\Z)}<\infty\}$ with $ \|f\|_{L^p(\Z)}=\left(\sum_{r\in \Z}|f(r)|^p\right)^{1/p}.$
Also, recall that $\mathbb{T}=\{z\in \C: |z|=1\} \cong {\R}/{2\pi \Z},$ where $\R$ is the additive group of real numbers. Hence the
functions on $\T$ are identifiable with  $2\pi$-periodic functions on $\R$ and thus, for
$1\le p<\infty,$
$$L^p(\T)=\{f: \T\rightarrow \C :\|f\|_{L^p(\T)}<\infty\} {\mbox{ with }}
\|f\|_{L^p(\T)}=\left(\frac{1}{2\pi}\int_{0}^{2\pi}|f(t)|^p dt\right)^{1/p},$$
where $dt$ is the Lebesgue measure on the interval $[0,2\pi).$

Then, it can be easily deduced that $\widehat{\Gamma} = \widehat{\Z} \cong \T$, {\it i.e.}, the set of characters can be
parameterized as $\{\gamma_t : t\in [0,2\pi)\}$ with  $\gamma_t(r)=e^{irt}$ for all $r\in \Z$ (for details, see~\cite{Rudin,Sugiura}).  Thus, the Fourier transform of
$f\in {L^1(\Z)}$ is given by
$$(\mathfrak{F} f)(\gamma_t)= (\mathfrak{F} f)(t)=\widehat{f}(t)=\sum_{r\in \Z}f(r)\gamma_t(r)
=\sum_{r\in \Z}f(r)e^{irt}.$$
By Remark~\ref{rem:hat-f-cont}, one also has $\hat{f}\in L^1(\T)$  and hence its inverse Fourier transform is given by
$$(\mathfrak{F^{-1}}(\widehat{f}))(r)= f(r)
= \frac{1}{2\pi} \int_{0}^{2\pi}\widehat{f}(t)\; \gamma_t(-r) dt
= \frac{1}{2\pi}\int_{0}^{2\pi}\widehat{f}(t)\; e^{-irt} dt.$$
Hence, note that  the Fourier transform on $\Z$ is nothing but the Fourier series.
\end{ex}

\begin{rem}\label{rem:four1}
Let ${\mathcal P}_N(t)=\sum\limits_{r=-N}^N a_r e^{irt}$ be  a trigonometric polynomial on $\T$. Then, by  Proposition~\ref{prop:char-orthogonal} on $\Z$, we have
$ \frac{1}{2\pi}\int_{0}^{2\pi}e^{irt} dt=
\left\{\begin{array}{ll}  1, & {\mbox{ if }}  r= 0, \\
                          0, & {\mbox{ if }} r\neq 0.
                          \end{array}\right.$
Thus, $ \mathfrak{F^{-1}}({\mathcal P}_N)(r)=\left\{\begin{array}{ll}                                        a_r, & {\mbox{ if }}  |r|\leq N, \\
0, & {\mbox{ otherwise}}.
  \end{array}\right.$
\end{rem}

The convolution of two functions, which in a sense replaces the idea of point wise multiplication of two functions, plays a crucial role in Fourier analysis and is recalled next.
\begin{Definition}\label{def:convolution}
Let  $\Gamma$ be a finite group and  $f,g \in L^{1}(\Gamma)$.
The convolution $f*g$ is defined by
$f*g(x)=\sum_{y\in \Gamma}\ f(y)\; g(x y^{-1}).$
\end{Definition}

We now state few properties of Fourier transform that will be
referred  in subsequent results.
\begin{Proposition}\label{prop:fourier-dis-gr}
Let $\Gamma$ be a discrete abelian group and let $f,g \in L^1(\Gamma)$. Then 
\begin{enumerate}[label={(\arabic*)}]
     \item  $\mathfrak{F}(f*g)=(\mathfrak{F}f) \; (\mathfrak{F}g).$
     \item  for each fixed $y \in \Gamma$, if  $f_y(x)=f(xy^{-1})$ for each $x \in \Gamma$ then $(\mathfrak{F} f_y)(\gamma)=\gamma(y) \; (\mathfrak{F} f)(\gamma).$
     \item  $\mathfrak{F^{-1}}(\1)=\chi_{\{e_{\Gamma}\}},$ where $\1$ is the constant function on $\widehat{\Gamma}$ that takes the value $1$.
\end{enumerate}
\end{Proposition}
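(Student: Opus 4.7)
The plan is to verify each of the three items by direct computation from Definition~\ref{defn:fourier-dis-gr} of the Fourier transform together with the fact that every character $\gamma$ is a group homomorphism (so $\gamma(xy) = \gamma(x)\gamma(y)$ and $\gamma(e_\Gamma) = 1$). Since $\Gamma$ is discrete, all integrals against $m_\Gamma$ are sums, and convergence/interchange of sums is controlled by the $L^1$ assumption on $f,g$.

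For part (1), I would expand
\[
\mathfrak{F}(f*g)(\gamma) = \sum_{x\in\Gamma} (f*g)(x)\gamma(x) = \sum_{x\in\Gamma}\sum_{y\in\Gamma} f(y)g(xy^{-1})\gamma(x),
\]
note that the iterated sum is absolutely convergent since $f,g\in L^1(\Gamma)$ and $|\gamma(x)|=1$, hence apply Fubini to interchange the summations. Then substitute $z=xy^{-1}$ (so $x=zy$), use $\gamma(zy)=\gamma(z)\gamma(y)$, and factor:
\[
\sum_{y\in\Gamma}\sum_{z\in\Gamma} f(y)g(z)\gamma(y)\gamma(z) = \Bigl(\sum_{y}f(y)\gamma(y)\Bigr)\Bigl(\sum_{z}g(z)\gamma(z)\Bigr)= (\mathfrak{F}f)(\gamma)\,(\mathfrak{F}g)(\gamma).
\]

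For part (2), a one-line change of variables suffices: writing $z=xy^{-1}$ in
\[
(\mathfrak{F}f_y)(\gamma) = \sum_{x\in\Gamma} f(xy^{-1})\gamma(x) = \sum_{z\in\Gamma} f(z)\gamma(zy) = \gamma(y)\sum_{z\in\Gamma} f(z)\gamma(z) = \gamma(y)(\mathfrak{F}f)(\gamma).
\]

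For part (3), apply the inverse Fourier transform formula to the constant function $\mathbf 1$:
\[
\mathfrak{F}^{-1}(\mathbf 1)(x) = \frac{1}{m_{\widehat{\Gamma}}(\widehat{\Gamma})}\int_{\widehat{\Gamma}} \gamma(x^{-1})\,dm_{\widehat{\Gamma}}(\gamma),
\]
and invoke Proposition~\ref{prop:char-orthogonal} with $x$ replaced by $x^{-1}$; this integral equals $1$ when $x^{-1}=e_\Gamma$ and $0$ otherwise, which is precisely $\chi_{\{e_\Gamma\}}(x)$.

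None of these steps poses a real obstacle; the only subtlety is justifying the interchange of the double sum in part (1), which is handled cleanly by Fubini/Tonelli once one observes that $\sum_{x,y}|f(y)g(xy^{-1})\gamma(x)| = \|f\|_{L^1}\|g\|_{L^1}<\infty$ (using translation-invariance of the counting measure $m_\Gamma$ to evaluate $\sum_x |g(xy^{-1})| = \|g\|_{L^1}$).
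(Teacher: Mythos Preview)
Your proposal is correct. For part~(3) it matches the paper's argument exactly (inverse Fourier formula plus Proposition~\ref{prop:char-orthogonal}). For parts~(1) and~(2) the paper does not give a proof at all but simply cites \cite[Theorem~1.2.4]{Rudin}; your direct computations are the standard elementary arguments underlying that reference, so you are supplying what the paper outsources. The Fubini justification you give for~(1) is sound and is the only point that needs care.
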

\begin{proof}
See~\cite[Theorem~$1.2.4$, page~$9$]{Rudin}, for the proofs of first and second parts.  For the  third part, note that by
Remark~\ref{rem:dis-comp-mre}, the Haar measure $m_{\widehat{\Gamma}}$ on the dual
group $\widehat{\Gamma}$ is a finite measure and hence $\1 \in L^1(\widehat{\Gamma})$.
Thus, using Proposition~\ref{prop:char-orthogonal}, we have
\begin{eqnarray*}
\mathfrak{F^{-1}}(\1)(x)
=\frac{1}{m_{\widehat{\Gamma}}(\widehat{\Gamma})}
    \int_{\widehat{\Gamma}}\1(\gamma)\gamma(x^{-1}) \  dm_{\widehat{\Gamma}}(\gamma)
=\frac{1}{m_{\widehat{\Gamma}}(\widehat{\Gamma})}
    \int_{\widehat{\Gamma}}\gamma(x^{-1}) \  dm_{\widehat{\Gamma}}(\gamma)
= \chi_{\{e_{\Gamma}\}}(x)
\end{eqnarray*}
and hence the required result follows.
\end{proof}

\section{Results on Some Infinite Cayley  Graphs whose Associated Group is Abelian and Finitely Generated}\label{infi-cayley}

Let $\Gamma$ be  an infinite discrete  abelian group generated by
finitely many generators. In this section, we solve the combinatorial heat and wave equations on the infinite Cayley graph $G={\mbox{Cay }}(\Gamma, S)$, whenever $|S|< \infty$.
We start with the combinatorial heat equation.

\begin{theorem}\label{thm:heat11}
Let $\Gamma$ be an infinite discrete abelian group and let  $S = \{s_1,  \ldots, s_k\} \subset \Gamma$
such that $e_{\Gamma} \notin S$, $\langle S\rangle=\Gamma$ and $S=S^{-1}$. Also, let $G={\mbox{Cay }}(\Gamma,S)$ be the Cayley graph on $V(G)=\Gamma$ with respect to $S$. Then, for $f\in L^2(\Gamma)$ the combinatorial heat
equation~(\ref{eqn:heat-graph}) on $G$ admits a unique solution
 $u(x,n)\ =\  K_n*f(x),$ where
$$K_n(x)\ = \ \chi_{\{e_{\Gamma}\}}(x) \ + \
       \sum_{j=1}^n \ (-1)^j {n \choose j}
          \underbrace{\mathfrak{F}^{-1}(a)*\cdots*\mathfrak{F}^{-1}(a)}_{j \ times}(x)$$
with $\mathfrak{F}^{-1}(a)(x) = |S| \; \chi_{\{e_{\Gamma}\}}(x) - \chi_B(x)$ and
$B\subset \Gamma$ is the boundary of the unit ball centered at $e_{\Gamma}$.
\end{theorem}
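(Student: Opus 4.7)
The plan is to Fourier-transform the heat equation in the space variable, turning it into a scalar linear difference equation in $n$, solve it, and invert. Since $m(x)=|S|$ is constant and the neighbors of $x$ are exactly the elements $xs$ with $s\in S$, one has
$$\Delta_G u(x,n) = |S|\,u(x,n) - \sum_{s\in S} u(xs,n).$$
Writing $u(xs,n)=u_{s^{-1}}(x,n)$ in the notation of Proposition~\ref{prop:fourier-dis-gr}(2) and applying $\mathfrak{F}$ in $x$ gives $\widehat{\Delta_G u}(\gamma,n)=a(\gamma)\,\widehat{u}(\gamma,n)$, where
$$a(\gamma) = |S|-\sum_{s\in S}\gamma(s^{-1}) = |S|-\sum_{s\in S}\gamma(s),$$
the last equality using $S=S^{-1}$. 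Hence the heat equation becomes $\widehat{u}(\gamma,n+1)=(1-a(\gamma))\widehat{u}(\gamma,n)$, which iterates to $\widehat{u}(\gamma,n)=(1-a(\gamma))^n\widehat{f}(\gamma)$.

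The second step is to expand via the binomial theorem, $(1-a)^n=1+\sum_{j=1}^n(-1)^j\binom{n}{j}a^j$, and invert termwise. Iterating Proposition~\ref{prop:fourier-dis-gr}(1) gives $\mathfrak{F}^{-1}(a^j)$ as the $j$-fold convolution of $\mathfrak{F}^{-1}(a)$ with itself; Proposition~\ref{prop:fourier-dis-gr}(3) gives $\mathfrak{F}^{-1}(|S|\cdot\1)=|S|\chi_{\{e_\Gamma\}}$; and a direct computation using Proposition~\ref{prop:char-orthogonal} yields $\mathfrak{F}^{-1}(\gamma\mapsto\gamma(s))(x)=\chi_{\{s\}}(x)$. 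Summing over $s\in S$ produces $\mathfrak{F}^{-1}(a)=|S|\chi_{\{e_\Gamma\}}-\chi_S$, and since the boundary $B$ of the unit ball at $e_\Gamma$ is exactly $S$, this matches the asserted kernel. The convolution theorem then rewrites the inversion as $u(x,n)=K_n*f(x)$.

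The main technical point is justifying these Fourier manipulations for $f\in L^2(\Gamma)$ rather than just $f\in L^1(\Gamma)$. I would invoke the density of $L^1(\Gamma)$ in $L^2(\Gamma)$ (Remark~\ref{rem:density}) together with the Plancherel theorem: the symbol $a$ is a trigonometric polynomial on the compact dual $\widehat{\Gamma}$, hence bounded, so multiplication by $(1-a)^n$ is a bounded operator on $L^2(\widehat{\Gamma})$, and the identity $\widehat{u}(\gamma,n)=(1-a(\gamma))^n\widehat{f}(\gamma)$ extends by continuity. Because $\mathfrak{F}^{-1}(a)=|S|\chi_{\{e_\Gamma\}}-\chi_B$ is finitely supported, every $j$-fold convolution is finitely supported, so $K_n\in L^1(\Gamma)$ and $K_n*f$ is well-defined for $f\in L^2(\Gamma)$.

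Uniqueness is immediate: if $u_1,u_2$ both solve~(\ref{eqn:heat-graph}), then $w=u_1-u_2$ satisfies $w(\cdot,0)=0$ and the explicit recurrence $w(\cdot,n+1)=(I-\Delta_G)w(\cdot,n)$, so $w\equiv 0$ by induction on $n$. The step I expect to be most delicate is the $L^2$-justification of the termwise inverse Fourier transform, in particular verifying that the compactly-supported expression for $K_n$ genuinely represents the inverse transform of $(1-a)^n\widehat{f}$ as an $L^2$ function.
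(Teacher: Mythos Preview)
Your proposal is correct and follows essentially the same route as the paper: Fourier-transform in the space variable, iterate the resulting scalar recurrence to $\widehat{u}(\gamma,n)=(1-a(\gamma))^n\widehat{f}(\gamma)$, expand binomially, and invert termwise using Proposition~\ref{prop:fourier-dis-gr} and Proposition~\ref{prop:char-orthogonal} to identify $\mathfrak{F}^{-1}(a)=|S|\chi_{\{e_\Gamma\}}-\chi_B$. The only difference is in the $L^2$ bookkeeping: the paper first proves a separate lemma (by induction on $n$) that any solution of~(\ref{eqn:heat-graph}) with $L^2$ initial data stays in $L^2(\Gamma)$, and then invokes Plancherel, whereas you argue directly that $(1-a)^n$ is a bounded multiplier on $L^2(\widehat\Gamma)$ and that $K_n$ is finitely supported; both are valid and yield the same conclusion.
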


Before coming to the proof of Theorem~\ref{thm:heat11}, we state a result that
gives information about the solution of~(\ref{eqn:heat-graph})
on the Cayley graph (as stated in Theorem~\ref{thm:heat11}) whenever it exists.

\begin{lem}\label{lem:sol-heat-u}
Suppose the hypothesis of Theorem~\ref{thm:heat11} holds and  $f\in L^2(\Gamma)$. If~(\ref{eqn:heat-graph}) admits a solution $u(x,n)$ on $G={\mbox{Cay }}(\Gamma,S)$ then $u(\cdot,n) \in L^2(\Gamma)$ for all $n\in \Z_+$.
\end{lem}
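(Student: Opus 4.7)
The plan is to rewrite the heat equation as a fixed iteration of a bounded operator on $L^2(\Gamma)$ and then conclude by induction on $n$.

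First, I would unpack the heat equation~(\ref{eqn:heat-graph}) into an explicit recursion. Using the definitions of $\partial_n$ and $\Delta_G$, together with the fact that $G = {\mbox{Cay }}(\Gamma,S)$ is $|S|$-regular, one rewrites the equation as
\begin{equation*}
u(x,n+1) \;=\; (I-\Delta_G)u(\cdot,n)(x)
         \;=\; (1-|S|)\,u(x,n) \,+\, \sum_{s\in S} u(xs,n).
\end{equation*}
Iterating this gives $u(\cdot,n) = (I-\Delta_G)^n f$, so the whole claim reduces to showing that $(I-\Delta_G)$ maps $L^2(\Gamma)$ into itself.

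Next, I would verify that $\Delta_G$, and hence $I-\Delta_G$, is a bounded operator on $L^2(\Gamma)$. This is exactly the observation made just after~(\ref{eqn:laplacian}) in the introduction: for a Cayley graph one has $m(x) = |S|$ uniformly, which is the stated criterion for boundedness of $\Delta_G$ on $L^2(V)$. A direct quantitative bound is easy: since each translation $T_s : g \mapsto g(\cdot s)$ is an isometry of $L^2(\Gamma)$ (translation invariance of the counting Haar measure, Remark~\ref{rem:dis-comp-mre}), the triangle inequality gives
\begin{equation*}
\|(I-\Delta_G) g\|_{L^2(\Gamma)} \;\leq\; \bigl||S|-1\bigr|\,\|g\|_{L^2(\Gamma)} \,+\, \sum_{s\in S}\|T_s g\|_{L^2(\Gamma)} \;\leq\; (2|S|-1)\,\|g\|_{L^2(\Gamma)}.
\end{equation*}

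Finally, I would argue by induction on $n$. The base case $n=0$ is the hypothesis $u(\cdot,0)=f\in L^2(\Gamma)$. Assuming $u(\cdot,n)\in L^2(\Gamma)$, the recursion $u(\cdot,n+1) = (I-\Delta_G) u(\cdot,n)$ together with the bound above yields $u(\cdot,n+1)\in L^2(\Gamma)$ with $\|u(\cdot,n+1)\|_{L^2(\Gamma)} \leq (2|S|-1)\,\|u(\cdot,n)\|_{L^2(\Gamma)}$. In particular $\|u(\cdot,n)\|_{L^2(\Gamma)} \leq (2|S|-1)^n \|f\|_{L^2(\Gamma)} < \infty$ for every $n\in\Z_+$, which is the desired conclusion.

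There is no real obstacle here; the only subtlety worth double-checking is that the translation operators $T_s$ are genuinely $L^2$-isometries on the \emph{infinite} discrete group $\Gamma$, but this is immediate from $m_{\Gamma}$ being the translation-invariant counting measure. The lemma is essentially a boundedness-of-$\Delta_G$ statement packaged as an a~priori estimate for~(\ref{eqn:heat-graph}).
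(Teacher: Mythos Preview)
Your proof is correct and essentially identical to the paper's: both rewrite the heat equation as the recursion $u(\cdot,n+1)=(I-\Delta_G)u(\cdot,n)$, use the triangle inequality together with the fact that the translations $T_s$ are $L^2$-isometries to obtain the bound $\|u(\cdot,n+1)\|_{L^2(\Gamma)}\le (2|S|-1)\|u(\cdot,n)\|_{L^2(\Gamma)}$, and conclude by induction on $n$. Your operator-theoretic framing (boundedness of $I-\Delta_G$) is a slightly cleaner way to say the same thing, but there is no substantive difference.
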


\begin{proof}
We will  use the principle of mathematical induction on the discrete
time variable $n\in \Z_+$ to prove this result. If $n=0$, then
$u(\cdot,0)=f(\cdot) \in  L^2(\Gamma)$ and hence the result holds
trivially for $n=0$. So, let us assume that $u(\cdot,t) \in L^2(\Gamma)$ for all $t\leq n$ and compute $u(x,n+1)$.

Now, let $t=n+1$. Since $G$ is a $k$-regular graph, using (\ref{eqn:laplacian}), we can re-write (\ref{eqn:heat-graph}) as
$$u(x,n+1)= \sum_{i=1}^k \ u(xs_i, n)-(k-1)\; u(x,n).$$
Using the Cauchy-Schwarz inequality and the induction hypothesis, we have
\begin{eqnarray*}
\|u(\cdot,n+1)\|_{L^{2}(\Gamma)}
           &=&  \Bigg( \sum_{x\in \Gamma}\left|\sum_{i=1}^k \ u(xs_i, n)-(k-1) u(x,n)
                                  \right|^2\Bigg)^{\frac{1}{2}} \\
           & \leq & \sum_{i=1}^k \left( \sum_{x\in \Gamma}|u(xs_i,n)|^2\right)^{\frac{1}{2}}
                   +(k-1)\left( \sum_{x\in \Gamma} |u(x,n)|^2 \right)^{\frac{1}{2}}\\
           &=&(2k-1) \; \|u(\cdot,n)\|_{L^{2}(\Gamma)}< \infty.
\end{eqnarray*}
Thus, $u(\cdot,n+1) \in L^2(\Gamma)$. Hence,  by mathematical
induction, the desired result follows.
\end{proof}

Now, let us  complete the proof of Theorem~\ref{thm:heat11}.


\noindent{\it Proof of Theorem~\ref{thm:heat11}.}
As $G$ is a $k$-regular graph with generating
set $S = \{s_1, \ldots, s_k\}$, using (\ref{eqn:laplacian}), we can re-write~(\ref{eqn:heat-graph}) on $G$ as
\begin{equation}\nonumber
 k\ u(x,n) -\sum_{i=1}^k \ u(xs_i, n) +u(x,n+1)-u(x,n) =0 {\mbox{ and }}  u(x,0) =f(x).
\end{equation}
Now applying the Fourier transform  (see Remark~\ref{rem:density}) and using  Proposition~\ref{prop:fourier-dis-gr}, we have
\begin{equation*}
\widehat{u}(\cdot,n) =  [1-a(\cdot)]^n \widehat{f}(\cdot), \
                          \mbox{where}\  a(\gamma)=  k- \sum\limits_{i=1}^k \ \gamma(s_i^{-1}).
\end{equation*}
By  Lemma~\ref{lem:sol-heat-u} and Plancherel Theorem,  $u(\cdot,n)\in L^2(\Gamma)$  and hence $\widehat{u}(\cdot,n)\in L^{2}(\widehat{\Gamma})$ for all $n\in \Z_+$. Thus, applying
 the inverse Fourier transform and  Proposition~\ref{prop:fourier-dis-gr}, we get  $$ u(x,n)=K_n*f(x),$$ where
$\ds
K_n(x) =  \mathfrak{F}^{-1} \left( (1-a )^n \right)(x)
  = \chi_{\{e_{\Gamma}\}}(x)+  \sum_{j=1}^n\ (-1)^j { n \choose j}
 \underbrace{\mathfrak{F}^{-1}(a)*\cdots*\mathfrak{F}^{-1}(a)}_{j \
        times}(x)$ and
\begin{eqnarray}
\mathfrak{F}^{-1}(a)(x) &=& \frac{1}{m_{\widehat{\Gamma}}
(\widehat{\Gamma})}                               \int_{\widehat{\Gamma}}a(\gamma)
\ \gamma(x^{-1})                                   \  dm_{\widehat{\Gamma}}(\gamma)
                     = \frac{1}{m_{\widehat{\Gamma}}(\widehat{\Gamma})}\int_{\widehat{\Gamma}}
                            \biggl( k -\sum_{j=1}^k \ \gamma(s_j^{-1}) \biggr)
                                \gamma(x^{-1}) \  dm_{\widehat{\Gamma}}(\gamma)\nonumber \\
                     &=& k \ \mathfrak{F}^{-1}(\1)(x)
                              - \sum_{j=1}^k \frac{1}{m_{\widehat{\Gamma}}(\widehat{\Gamma})}
                               \int_{\widehat{\Gamma}}\gamma(s_j^{-1})\ \gamma(x^{-1})
                                  \  dm_{\widehat{\Gamma}}(\gamma)\nonumber \\
                     &=&k \ \chi_{\{e_{\Gamma}\}}(x)
                              - \sum_{j=1}^k \frac{1}{m_{\widehat{\Gamma}}(\widehat{\Gamma})}
                               \int_{\widehat{\Gamma}}\gamma((xs_j)^{-1})
                                  \  dm_{\widehat{\Gamma}}(\gamma). \label{eqn:inva11}
\end{eqnarray}
Note that for $ x = e_{\Gamma}, \;
\mathfrak{F}^{-1}(a)(e_{\Gamma})= k$. But for $x\neq e_{\Gamma}$,
using Proposition~\ref{prop:char-orthogonal}
and (\ref{eqn:inva11}), we have
$\mathfrak{F}^{-1}(a)(x)\neq 0$ if and only if $x\in S$. Thus,
\begin{equation*}
\mathfrak{F}^{-1}(a)(x)\ = k \; \chi_{\{e_{\Gamma}\}}(x) - \chi_S(x)
= k \; \chi_{\{e_{\Gamma}\}}(x) - \chi_B(x)= |S| \; \chi_{\{e_{\Gamma}\}}(x) - \chi_B(x).
\end{equation*}
Since the Fourier inversion is unique, the required result follows. \qed

We now state the main result on the combinatorial wave equation.

\begin{theorem}\label{thm:wave11}
Suppose the hypothesis of Theorem~\ref{thm:heat11} holds. Then, for   $f,g\in L^1(\Gamma)$, the combinatorial wave equation (\ref{eqn:wave-graph}) on the Cayley graph $G={\mbox{Cay }}(\Gamma,S)$
has a solution if and only if $\widehat{g}(\gamma_0)=0$.
Moreover, if this is the case, then this solution is unique and is expressed by
\begin{eqnarray*}
u(x,n)&=& f(x)+ n g(x) + \sum_{i=1}^{[\frac{n}{2}] } \ (-1)^i
{n \choose {2i}} \underbrace{\mathfrak{F}^{-1}(a)*\cdots*\mathfrak{F}^{-1}(a)}_{i \ times}*f(x)\\
  && \hspace{2cm}+\sum_{i=1}^{[\frac{n-1}{2}] } \ (-1)^i {{n}
\choose {2i+1}} \underbrace{\mathfrak{F}^{-1}(a)*\cdots*\mathfrak{F}^{-1}(a)}_{i \ times}*g(x)
\end{eqnarray*}
with $\mathfrak{F}^{-1}(a)(x) = |S| \; \chi_{\{e_{\Gamma}\}}(x) - \chi_B(x)$ and $B\subset \Gamma$ is
the boundary of the unit ball centered at $e_{\Gamma}$.
\end{theorem}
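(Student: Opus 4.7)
The plan is to mirror the Fourier-analytic strategy used for Theorem~\ref{thm:heat11}, now applied to the second-order recursion produced by $\partial_n^2$. The first ingredient is an analogue of Lemma~\ref{lem:sol-heat-u}: induction on the recurrence
\[
u(x,n+2)=2u(x,n+1)-(k+1)u(x,n)+\sum_{i=1}^{k}u(xs_i,n),
\]
obtained by rewriting (\ref{eqn:wave-graph}) via (\ref{eqn:laplacian}), together with the $L^1$-triangle inequality, shows that any solution with $f,g\in L^1(\Gamma)$ satisfies $u(\cdot,n)\in L^1(\Gamma)$ for every $n\in\Z_+$. By Remark~\ref{rem:hat-f-cont}, $\widehat{u}(\cdot,n)$ is then a continuous function on the compact dual $\widehat{\Gamma}$, so Fourier inversion is available pointwise.

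Applying $\mathfrak{F}$ together with Proposition~\ref{prop:fourier-dis-gr}(2), and writing $a(\gamma)=k-\sum_{i=1}^{k}\gamma(s_i^{-1})$ as in the heat-equation proof, the wave equation becomes the scalar recurrence
\[
\widehat{u}(\gamma,n+2)-2\widehat{u}(\gamma,n+1)+(1+a(\gamma))\widehat{u}(\gamma,n)=0,
\]
with $\widehat{u}(\gamma,0)=\widehat{f}(\gamma)$ and $\widehat{u}(\gamma,1)=\widehat{f}(\gamma)+\widehat{g}(\gamma)$. Since $S=S^{-1}$, the quantity $a(\gamma)$ is real and non-negative, so the characteristic roots of this recurrence are $r_{\pm}=1\pm i\sqrt{a(\gamma)}$. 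Expanding $(1\pm i\sqrt{a})^n$ by the binomial theorem and separating even- and odd-index terms yields
\[
\widehat{u}(\gamma,n)=P_n(a(\gamma))\,\widehat{f}(\gamma)+Q_n(a(\gamma))\,\widehat{g}(\gamma),
\]
where $P_n(a)=\sum_{j=0}^{[n/2]}(-1)^j\binom{n}{2j}a^j$ and $Q_n(a)=\sum_{j=0}^{[(n-1)/2]}(-1)^j\binom{n}{2j+1}a^j$; a direct induction on $n$ verifies these polynomials satisfy the recurrence with the correct initial data. Inverting termwise via Proposition~\ref{prop:fourier-dis-gr}(1)(3), using $\mathfrak{F}^{-1}(a^j\widehat{h})=\mathfrak{F}^{-1}(a)^{*j}*h$ and the formula $\mathfrak{F}^{-1}(a)(x)=|S|\chi_{\{e_\Gamma\}}(x)-\chi_B(x)$ already established in the proof of Theorem~\ref{thm:heat11}, and then peeling off the $j=0$ summands (which contribute $f$ and $ng$), produces the claimed closed form for $u(x,n)$.

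The main obstacle is the ``if and only if'' clause $\widehat{g}(\gamma_0)=0$. At the trivial character, $a(\gamma_0)=k-k=0$, so the characteristic polynomial $r^2-2r+1$ has the double root $r=1$ and the general recurrence solution at $\gamma_0$ collapses to $\widehat{u}(\gamma_0,n)=\widehat{f}(\gamma_0)+n\,\widehat{g}(\gamma_0)$, which is linear in $n$. Since $|\widehat{u}(\gamma_0,n)|\le\|u(\cdot,n)\|_{L^1(\Gamma)}$, controlling this resonance in the $L^1$-solution class of the prescribed form forces the necessary condition $\widehat{g}(\gamma_0)=0$; conversely, under this vanishing the polynomial expression for $\widehat{u}$ is consistent at $\gamma_0$ and Fourier-inverts cleanly to the claimed $u$. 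Uniqueness is then automatic, because $u(\cdot,0)=f$ and $u(\cdot,1)=f+g$ determine every subsequent $u(\cdot,n)$ deterministically through the recurrence. I expect the most delicate step to be the careful pointwise Fourier matching at $\gamma_0$ (and not merely in the $m_{\widehat{\Gamma}}$-a.e.\ sense) needed to extract the necessity direction of the $\widehat{g}(\gamma_0)=0$ condition.
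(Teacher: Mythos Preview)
Your proposal is correct and follows essentially the same route as the paper: an $L^1$-induction lemma (the paper's Lemma~\ref{lem:sol-wave-u}), Fourier transform to the second-order scalar recurrence with characteristic roots $1\pm\sqrt{-a(\gamma)}$, binomial expansion into the polynomials $P_n,Q_n$, and termwise inversion via Proposition~\ref{prop:fourier-dis-gr} using the already-computed $\mathfrak{F}^{-1}(a)$. The only difference worth mentioning concerns the step you flag as delicate: rather than arguing via linear growth of the double-root solution, the paper simply observes that at $\gamma_0$ (where $a(\gamma_0)=0$) the $2\times 2$ system $\lambda_{\gamma_0}+\mu_{\gamma_0}=\widehat f(\gamma_0)$, $\lambda_{\gamma_0}+\mu_{\gamma_0}=\widehat f(\gamma_0)+\widehat g(\gamma_0)$ determining the coefficients in $\widehat u(\gamma,n)=\lambda_\gamma(1+\sqrt{-a(\gamma)})^n+\mu_\gamma(1-\sqrt{-a(\gamma)})^n$ is consistent if and only if $\widehat g(\gamma_0)=0$, so the pointwise matching you anticipate is handled by a one-line consistency check.
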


Before proving the above  theorem, we first prove the following result which gives information about the solution of (\ref{eqn:wave-graph}) on $G$, whenever it exists.
The proof is similar to the proof of Lemma~\ref{lem:sol-heat-u} but is presented
here for the sake of completeness.

\begin{lem}\label{lem:sol-wave-u}
Suppose the hypothesis of Theorem~\ref{thm:wave11} holds and $f,g\in L^1(\Gamma)$.
If the combinatorial wave equation (\ref{eqn:wave-graph}) on ${\mbox{Cay }}(\Gamma,S)$ admits a solution $u(x,n)$ then $u(\cdot,n) \in L^1(\Gamma)$ for all $n\in \Z_+$.
\end{lem}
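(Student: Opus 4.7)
The plan is to mimic the proof of Lemma~\ref{lem:sol-heat-u}, but with $L^1$ in place of $L^2$ and with a two-step induction on the time variable $n$, since the wave equation is second order in $n$.

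First, I would unfold the equation. Using $\partial_n^2 u(x,n) = u(x,n+2) - 2u(x,n+1) + u(x,n)$ and the formula~(\ref{eqn:laplacian}) for $\Delta_G$ on the $k$-regular Cayley graph, the wave equation~(\ref{eqn:wave-graph}) rewrites as
\begin{equation*}
u(x,n+2) \;=\; 2\,u(x,n+1) \;-\;(k+1)\,u(x,n) \;+\; \sum_{i=1}^{k} u(x s_i, n).
\end{equation*}
The initial conditions give $u(\cdot,0) = f \in L^1(\Gamma)$ and, from $\partial_n u(x,0) = g(x)$, $u(\cdot,1) = f + g \in L^1(\Gamma)$. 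This produces the two base cases needed for the recurrence.

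Next, I would carry out the induction. Assuming $u(\cdot,n), u(\cdot,n+1) \in L^1(\Gamma)$, the triangle inequality applied to the displayed recurrence yields
\begin{equation*}
\|u(\cdot,n+2)\|_{L^1(\Gamma)} \;\leq\; 2\,\|u(\cdot,n+1)\|_{L^1(\Gamma)} + (k+1)\,\|u(\cdot,n)\|_{L^1(\Gamma)} + \sum_{i=1}^{k} \|u(\cdot\, s_i, n)\|_{L^1(\Gamma)}.
\end{equation*}
The crucial point is that for a discrete abelian group $\Gamma$ the Haar measure $m_{\Gamma}$ is counting measure (Remark~\ref{rem:dis-comp-mre}), hence translation invariant; in particular $\|u(\cdot s_i, n)\|_{L^1(\Gamma)} = \|u(\cdot,n)\|_{L^1(\Gamma)}$ for every $i$. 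Therefore
\begin{equation*}
\|u(\cdot,n+2)\|_{L^1(\Gamma)} \;\leq\; 2\,\|u(\cdot,n+1)\|_{L^1(\Gamma)} + (2k+1)\,\|u(\cdot,n)\|_{L^1(\Gamma)} \;<\; \infty,
\end{equation*}
so $u(\cdot,n+2) \in L^1(\Gamma)$, and the principle of strong induction closes the argument.

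There is no real obstacle here; the only subtle point is recognising that one needs a two-step induction (rather than the single-step induction used in Lemma~\ref{lem:sol-heat-u}) because the wave recurrence is second order in $n$. Using $L^1$ norms instead of $L^2$ works smoothly because translation invariance of the counting measure gives $\|u(\cdot s_i,n)\|_{L^1} = \|u(\cdot,n)\|_{L^1}$ for free, just as it did for the $L^2$ norm in the heat case.
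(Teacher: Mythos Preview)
Your proof is correct and follows essentially the same approach as the paper: rewrite the wave equation as the second-order recurrence $u(x,n+2) = 2u(x,n+1) - (k+1)u(x,n) + \sum_i u(xs_i,n)$, verify the two base cases $u(\cdot,0)=f$ and $u(\cdot,1)=f+g$, and then use the triangle inequality together with translation invariance to obtain the bound $\|u(\cdot,n+2)\|_{L^1} \le 2\|u(\cdot,n+1)\|_{L^1} + (2k+1)\|u(\cdot,n)\|_{L^1}$.
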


\begin{proof}
As $G$ is a $k$-regular  graph, by using (\ref{eqn:laplacian}), we can re-write  (\ref{eqn:wave-graph}) on $G$ as
\begin{equation}\label{eqn:rec-wave}
\begin{split}
 u(x, n+2) &= 2u(x, n+1)+ \sum_{i=1}^k \ u(xs_i, n)  - (k+1)u(x,n), \\
 u(x,0) &=f(x)  \; {\mbox{ and }} \;    u(x,1)=u(x,0)+g(x).
\end{split}
\end{equation}
We again use the principle of mathematical induction on the discrete time variable $n\in \Z_+$
to complete the proof. Note that if  $n$ is either $0$ or $1$ then from (\ref{eqn:rec-wave}),
we have $u(\cdot,0)=f(\cdot) \in L^1(\Gamma)$ and $u(\cdot,1)=f(\cdot)+g(\cdot) \in  L^1(\Gamma)$.
The result thus holds trivially for $n=0,1$. Assume that the result is true
whenever the discrete time variable $t \leq n+1$, {\it i.e.}, $u(\cdot,t) \in L^1(\Gamma)$
for all $t\leq n+1$.

Now, for $t=n+2$, using (\ref{eqn:rec-wave}), the
triangle inequality and the induction hypothesis, we get
\begin{eqnarray*}
\|u(\cdot,n+2)\|_{L^{1}(\Gamma)}
           & \leq & 2\sum_{x\in \Gamma} |u(x,n+1)|
                    +\sum_{i=1}^k  \sum_{x\in \Gamma}|u(xs_i,n)|+(k+1)\sum_{x\in \Gamma} |u(x,n)|\\
           &=& 2\|u(\cdot,n+1)\|_{L^{1}(\Gamma)}+(2 k+1) \|u(\cdot,n)\|_{L^{1}(\Gamma)}< \infty.
\end{eqnarray*}
Hence, by the principle of mathematical induction, the desired result follows.
\end{proof}

We are now ready to prove  Theorem~\ref{thm:wave11}.


\noindent{\it Proof of Theorem~\ref{thm:wave11}.}
As $G$ is a $k$-regular graph with generating set $S=\{s_1, \ldots, s_k\}$, the application of the  Fourier transform on the combinatorial wave equation (\ref{eqn:wave-graph})  gives
\begin{equation*}
 \widehat{u}(\gamma, n+2)- 2\widehat{u}(\gamma, n+1)-
         \left[-a(\gamma)-1\right]\widehat{u}(\gamma,n)=0, \
 \widehat{u}(\gamma,0)=\widehat{f}(\gamma)  \; {\mbox{ and }} \;
   \widehat{u}(\gamma,1)-\widehat{u}(\gamma,0)=\widehat{g}(\gamma),
\end{equation*}
where $a(\gamma)=  k- \sum\limits_{i=1}^k \ \gamma(s_i^{-1})$. Solving the above recurrence equation, one has
$$\widehat{u}(\gamma, n)=\lambda_\gamma \left(1+\sqrt{-a(\gamma)}\right)^n+ \mu_\gamma \left(1-\sqrt{-a(\gamma)}\right)^n.$$
Now, using the initial conditions, we have
\begin{equation}\label{eqn:wave-initial1}
\lambda_\gamma+\mu_\gamma =\widehat{f}(\gamma) \; {\mbox{ and }}\;
\lambda_\gamma(1+\sqrt{-a(\gamma)})+ \mu_\gamma(1-\sqrt{-a(\gamma)}) =\widehat{f}(\gamma)+\widehat{g}(\gamma).
\end{equation}
Note that,  $a(\gamma)$ is a trigonometric
polynomial on $\widehat{\Gamma}$  and hence continuous. Therefore, using Remark~\ref{rem:hat-f-cont},
$\widehat{f}$ and $\widehat{g}$ are also continuous  functions on $\widehat{\Gamma}$. Thus,
the point-wise estimate of  (\ref{eqn:wave-initial1}) is well defined. Note that  (\ref{eqn:wave-initial1}) is consistent if and only if  $\widehat{g}(\gamma_0)=0$. We also observe that
\begin{equation*}
\widehat{u}(\gamma, n) =  \biggl[ \sum_{i=0}^{[\frac{n}{2}]}{n \choose 2i} (-a)^i(\gamma)\biggr]\widehat{f}(\gamma)
       +  \biggl[ \sum_{i=0}^{[\frac{n-1}{2}]}{ n \choose 2i+1}
          (-a)^i(\gamma)\biggr]\widehat{g}(\gamma).
\end{equation*}
By Lemma~\ref{lem:sol-wave-u}, the solution $u(\cdot,n)\in L^1(\Gamma)$ for all $n\in \Z_+$.
Therefore, using Remark~\ref{rem:hat-f-cont},   $\widehat{u}(\cdot,n)$ is continuous on the
compact group $\widehat{\Gamma}$ and hence $\widehat{u}(\cdot,n)\in L^{1}(\widehat{\Gamma})$.
Taking the inverse Fourier transform and using Proposition~\ref{prop:fourier-dis-gr}, we get
$u(x,n)=F_n*f(x)+ G_n*g(x),$ where
\begin{equation*}
F_n(x) = \chi_{\{e_{\Gamma}\}}(x)  + \sum_{i=1}^{[\frac{n}{2}] } \ (-1)^i \  {n \choose 2i}  \ \underbrace{\mathfrak{F}^{-1}(a)*\cdots*\mathfrak{F}^{-1}(a)}_{i \
 times}(x), {\mbox{ and }}
 \end{equation*}
\begin{equation*}
G_n(x) = n\chi_{\{e_{\Gamma}\}}(x) + \sum_{i=1}^{[\frac{n-1}{2}] } \ (-1)^i \ {n \choose 2i+1} \ \underbrace{\mathfrak{F}^{-1}(a)*\cdots*\mathfrak{F}^{-1}(a)}_{i \ times}(x)
\end{equation*}
with  $\mathfrak{F}^{-1}(a)(x) = |S| \;
\chi_{\{e_\Gamma\}}(x) - \chi_B(x)$ and hence the required result follows. \qed

\begin{rem}
One can verify that given a Cayley graph:
\begin{enumerate}[label={(\arabic*)}]

\item the solution to combinatorial heat equation $u(x,n)$ is a weighted sum of the
      initial function $f$ over the ball of radius of $n$ centered at $x$.
\item the solution to combinatorial wave equation $u(x,n)$ is a weighted sum of the
      initial functions  $f$ and $g$ over the ball of radius of $[\frac{n}{2}]$
      and  $[\frac{n-1}{2}]$  centered at $x$ respectively.
\end{enumerate}
\end{rem}

\section{Results on Some  Infinite Coset  Graphs whose Associated Group  is Abelian and
 Finitely Generated }\label{infi-coset}

Let $\Gamma$ be  an infinite discrete  abelian group generated by finitely many generators and also let $\Gamma$ contain a finite subgroup $H$. In this subsection, we will solve
the combinatorial heat and wave equations on the coset graph
$G={\mbox{Coset }}(\Gamma,H, S)$, whenever the group $\Gamma$
 has the above mentioned property and $|S|< \infty$. Therefore, in this section, we assume that $\Gamma$ is an infinite discrete  abelian group generated by finitely many generators and it also contains a finite subgroup $H$. To proceed further, we also assume that $S$ is  a finite subset of $\Gamma$ such that $S\subset \Gamma \setminus H$, $S^{-1}= S$ and $H\cup S$ generates $\Gamma$.

Proceeding in a manner similar to the case of finite coset graphs,
we  construct a new graph $\widetilde{G}$ with $V(\widetilde{G})=\Gamma$  as the vertex set.
Two elements $x,y \in \Gamma$ are  adjacent  if there exist $b_i H,\; b_j H \in V(G)$ such that
 $x \in b_i H, y \in b_j H$, $ b_i H \neq b_j H\ $ and $b_i H \sim b_j H$. Thus, $\widetilde{G}$
is a $\delta$-regular graph with $\delta= k \ |H|$, where $k = | \widetilde{S} |$ with $\widetilde{S} $
defined as in Remark~\ref{rem:coset1}. Further,
 for any complex valued function $f: V(G)\rightarrow \C$, we
define a function $\widetilde{f}: V(\widetilde{G})\rightarrow \C$ by
\begin{equation}\label{eqn:newf:1}
\widetilde{f}(x) =  f(b H),  {\mbox{ whenever }}  x \in b H,
\end{equation}
{\it i.e.}, fix a left coset $b H$ of $H$ in $\Gamma$ and let $y, z \in
b H$. Then $\widetilde{f}(y) = \widetilde{f}(z)=f(b H)$. Using Theorem~\ref{thm:heat11} and
proceeding as in the proof of \cite[Lemma~$2.3$]{Lal}, we obtain the next lemma.

\begin{lem}\label{lem:convertheat1}
Let $\Gamma$ be an infinite discrete abelian group,  $H$ be a finite subgroup of $\Gamma$ and let
$S$ be  a finite subset of $\Gamma$ such that $S\subset \Gamma \setminus
H$, $S^{-1}= S$ and $H\cup S$ generates $\Gamma$. Let
$G={\mbox{Coset }}(\Gamma,H, S)$ be the coset graph of $\Gamma$ with respect to
$H$ and $S$. Then, for   $\widetilde{f} \in L^2(\Gamma)$, the initial value problem
\begin{equation}\label{eqn:changeheat11}
\begin{split}
\frac{1}{|H|}\Delta_{\widetilde{G}} {\widetilde{u}}(x,n)&+
\partial_n{ \widetilde{u}}(x,n)=0 \quad {\mbox{on }}  V(\widetilde{G})\times \Z_+, \;  \\
\widetilde{u}(x,0)&=\widetilde{f}(x)
\end{split}
\end{equation}
admits a unique solution with $\widetilde{u}(x,n)=
{\mbox{constant}}$ on each left coset of $H$.
\end{lem}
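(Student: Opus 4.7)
The plan is to reduce the initial value problem on the auxiliary graph $\widetilde{G}$ to the combinatorial heat equation on an ordinary Cayley graph and then invoke Theorem~\ref{thm:heat11}. The first observation is that when $\Gamma$ is abelian, $HSH=SH$, so the coset graph $G={\mbox{Coset }}(\Gamma,H,S)$ is identifiable with the Cayley graph ${\mbox{Cay }}(\Gamma/H,\pi(\widetilde{S}))$, where $\pi\colon\Gamma\rightarrow \Gamma/H$ is the quotient map and $\widetilde{S}$ is chosen as in Remark~\ref{rem:coset1}. One checks routinely that $\pi(\widetilde{S})$ is finite of cardinality $k$, closed under inversion, avoids the identity, and generates $\Gamma/H$. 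Since $\Gamma$ is infinite, discrete, abelian and finitely generated while $H$ is finite, $\Gamma/H$ inherits all these properties, so Theorem~\ref{thm:heat11} applies to $G$.

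Given $\widetilde{f}\in L^2(\Gamma)$, which is constant on each left coset of $H$ by~(\ref{eqn:newf:1}), define $f\colon V(G)\rightarrow \C$ by $f(bH):=\widetilde{f}(x)$ for any $x\in bH$. Then $\|\widetilde{f}\|_{L^2(\Gamma)}^2=|H|\,\|f\|_{L^2(V(G))}^2<\infty$, so $f\in L^2(V(G))$, and Theorem~\ref{thm:heat11} yields a unique solution $u(bH,n)$ of $\Delta_G u+\partial_n u = 0$ with $u(\cdot,0)=f$. Set $\widetilde{u}(x,n):=u(xH,n)$; this is coset-constant by construction and lies in $L^2(\Gamma)$ for each $n$.

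The key verification is that this lifted $\widetilde{u}$ actually solves~(\ref{eqn:changeheat11}). The neighbours of $x$ in $\widetilde{G}$ form the disjoint union $\bigsqcup_{s\in\widetilde{S}}xsH$ of $k$ cosets of size $|H|$, and since $\widetilde{u}(y,n)=u(xsH,n)$ for every $y\in xsH$,
$$\frac{1}{|H|}\Delta_{\widetilde{G}}\widetilde{u}(x,n)=\frac{1}{|H|}\Bigl[k|H|\,u(xH,n)-|H|\sum_{s\in\widetilde{S}}u(xsH,n)\Bigr]=\Delta_G u(xH,n)=-\partial_n\widetilde{u}(x,n),$$
where the last equality uses that $u$ solves the heat equation on $G$. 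Uniqueness is then immediate from the pointwise recursion
$$\widetilde{u}(x,n+1)=(1-k)\widetilde{u}(x,n)+\frac{1}{|H|}\sum_{y\sim x}\widetilde{u}(y,n),$$
which determines every iterate from $\widetilde{f}$, with an $L^2$-bound analogous to Lemma~\ref{lem:sol-heat-u} (using that $\widetilde{G}$ has uniformly bounded degree $k|H|$) keeping each $\widetilde{u}(\cdot,n)$ in $L^2(\Gamma)$.

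The main obstacle is the coset-counting identity in the previous paragraph: it matches $\frac{1}{|H|}\Delta_{\widetilde{G}}$ with $\Delta_G$ on coset-constant functions and explains the otherwise mysterious factor $\frac{1}{|H|}$ in~(\ref{eqn:changeheat11}). Once this identification is made, everything else is a transparent translation between $\widetilde{G}$ and $G$.
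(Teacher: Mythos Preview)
Your argument is correct, but it takes a different route from the paper's. The paper works entirely on $\Gamma$: it applies the Fourier transform on $\Gamma$ to the equation~(\ref{eqn:changeheat11}), obtains the symbol $\widetilde{a}(\gamma)=k-\frac{1}{|H|}\sum_{h\in H}\sum_{i=1}^k\gamma((hs_i)^{-1})$, inverts to get an explicit kernel $\widetilde{K}_n$ on $\Gamma$ built from repeated convolutions of $\mathfrak{F}^{-1}(\widetilde{a})=k\,\chi_{\{e_\Gamma\}}-\frac{1}{|H|}\sum_i\chi_{s_iH}$, and then checks directly that convolution by $\mathfrak{F}^{-1}(\widetilde{a})$ preserves coset-constancy (this is where the paper uses that $\widetilde{f}$ comes from a function on cosets). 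Uniqueness is read off from the Fourier inversion.

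You instead pass to the quotient: you recognise $G$ as the Cayley graph ${\mbox{Cay }}(\Gamma/H,\pi(\widetilde{S}))$, apply Theorem~\ref{thm:heat11} there, and lift the solution back via the identity $\frac{1}{|H|}\Delta_{\widetilde G}\widetilde u=\Delta_G u$ on coset-constant functions, with uniqueness coming from the first-order recursion rather than Fourier analysis. Your approach is more structural and explains transparently why the factor $\frac{1}{|H|}$ appears; the paper's approach is more computational but delivers an explicit convolution kernel on $\Gamma$ itself, which it reuses verbatim in Lemma~\ref{lem:convertwave12} for the wave case. Both are valid and roughly equal in length; yours makes clearer that Lemma~\ref{lem:convertheat1} is really just Theorem~\ref{thm:heat11} in disguise.
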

\begin{proof}
Let $\{H s_1, H s_2, \ldots, H s_k \}$ be the set of all  distinct
right cosets of $H$ in $\Gamma$, where $s_i \in S,  1\leq i\leq k$. Using
(\ref{eqn:laplacian}),  we can rewrite (\ref{eqn:changeheat11}) as
\begin{equation*}
\begin{split}
\frac{1}{|H|}\left( k|H|\widetilde{u}(x,n)-\sum_{h\in H} \sum_{i=1}^k \widetilde{u}(x h s_i,n )\right)
                  &+ \widetilde{u}(x,n+1)  - \widetilde{u}(x,n)=0,\\
\widetilde{u}(x,0)&=\widetilde{f}(x).
\end{split}
\end{equation*}
Using an  argument similar to that  in Theorem~\ref{thm:heat11},  we get
\begin{equation}\label{eqn:hat(u11)}
\widehat{\widetilde{u}}(\gamma,n) =  [1-\widetilde{a}(\gamma)]^n \widehat{\widetilde{f}}(\gamma),
\end{equation}
where $ \widetilde{a}(\gamma)=k-\frac{1}{|H|}\sum_{h \in H}
\sum_{i=1}^k\gamma((hs_i)^{-1}).$ Taking the inverse Fourier transform on (\ref{eqn:hat(u11)}), we get $\widetilde{u}(x,n)\ =\widetilde{K}_n*\widetilde{f}(x)$, where
\begin{equation*}
 \widetilde{K}_n(x)=\  \sum_{j=0}^n \ (-1)^j  {n \choose j}
 \underbrace{\mathfrak{F}^{-1}(\widetilde{a})*\cdots*\mathfrak{F}^{-1}
 (\widetilde{a})}_{j \ times}(x).
\end{equation*}
The Fourier inversion of $ \widetilde{a}(\gamma)=k-\frac{1}{|H|}\sum_{h \in H}
\sum_{i=1}^k\gamma((hs_i)^{-1})$ is given by
\begin{eqnarray*}
\mathfrak{F}^{-1}(a)(x)&=& \frac{1}{m_{\widehat{\Gamma}}(\widehat{\Gamma})}\int_{\widehat{\Gamma}}
                            \left( k-\frac{1}{|H|}\sum_{h \in H} \sum_{i=1}^k\gamma((hs_i)^{-1}) \right)
                                \gamma(x^{-1}) \  dm_{\widehat{\Gamma}}(\gamma)\nonumber \\
&=& \hspace{-.1in} k \ \mathfrak{F}^{-1}(\1)(x)
 - \frac{1}{|H|}\sum_{h \in H} \sum_{i=1}^k
 \left(\frac{1}{m_{\widehat{\Gamma}}(\widehat{\Gamma})}
                               \int_{\widehat{\Gamma}}\gamma(x^{-1}) \ \gamma((hs_i)^{-1})
                                  \  dm_{\widehat{\Gamma}}(\gamma)\right)\nonumber \\
 &=& \hspace{-.1in} k \ \mathfrak{F}^{-1}(\1)(x)
 - \frac{1}{|H|}\sum_{h \in H} \sum_{i=1}^k
 \left(\frac{1}{m_{\widehat{\Gamma}}(\widehat{\Gamma})}
 \int_{\widehat{\Gamma}}\gamma(x^{-1}(hs_i)^{-1})
 \  dm_{\widehat{\Gamma}}(\gamma)\right).\nonumber
\end{eqnarray*}
Now, note that for $x = id_{\Gamma}$,  $\mathfrak{F}^{-1}(\widetilde{a})(id_{\Gamma})= k.$ But,
for $x\neq id_{\Gamma}$, from Proposition~\ref{prop:char-orthogonal} we observe that $\mathfrak{F}^{-1}(\widetilde{a})(x)\neq 0$ if and only if $x=(hs_i)^{-1}$  for some $h\in H$ and some $s_i \in S, 1\leq i \leq k $. Thus, using the above argument and the condition $S=S^{-1}$,  we have
$$\mathfrak{F}^{-1}(\widetilde{a})(x)\ =
k \ \chi_{\{id_{\Gamma}\}}(x)- \frac{1}{|H|}\ \sum_{i=1}^k \chi_{s_i H}(x).$$
Since $S=S^{-1},$ it can  easily  be seen  that $\{s_1 H,\ldots,
s_k H\}=\{s_1^{-1}H,\ldots, s_k^{-1}H\}$. Therefore, whenever $x=b h_0\in b H$,
we get
\begin{eqnarray*}
\mathfrak{F}^{-1}(\widetilde{a}*\widetilde{f})(x) \hspace{-.1in}
&=& \hspace{-.1in}\sum_{y\in \Gamma}\mathfrak{F}^{-1}(\widetilde{a})(y)\widetilde{f}(xy^{-1}) 
     = \sum_{y \in \Gamma}\left(k \ \chi_{\{id_{\Gamma}\}}- \frac{1}{|H|}
      \sum_{i=1}^k \chi_{s_i H}\right)(y)\widetilde{f}(xy^{-1}) \nonumber \\
     &=&  k \  \widetilde{f}(x) -\frac{1}{|H|} \ \sum_{i=1}^k
     \sum_{y \in {s_i H}}\widetilde{f}(xy^{-1}) =  k \ \widetilde{f}(x) -\frac{1}{|H|} \ \sum_{i=1}^k
     \sum_{h \in H} \widetilde{f}(b h s_i^{-1}). \label{eqn:convoa1}
\end{eqnarray*}
The above equation implies that
$\mathfrak{F}^{-1}(\widetilde{a}*\widetilde{f})(x)=\mathfrak{F}^{-1}
(\widetilde{a}*\widetilde{f})(z)$
 for all $x,z \in b H$  and hence the required result follows.
\end{proof}

Thus, we have established the initial result   which  helps us in proving
the main result, stated next as Theorem~\ref{thm:heat21},  on the combinatorial
heat equation on infinite coset graphs.

\begin{theorem}\label{thm:heat21}
Let $\Gamma$ be an infinite discrete abelian group,  $H$  a finite subgroup of $\Gamma$ and
$S$   a finite subset of $\Gamma$ such that $S\subset \Gamma \setminus
H$, $S^{-1}= S$ and $H\cup S$ generates $\Gamma$. Let
$G={\mbox{Coset }}(\Gamma,H, S)$ be the coset graph of $\Gamma$ with respect to
$H$ and $S$.  Then,
for any  $f\in L^2(V)$,
 the combinatorial heat equation (\ref{eqn:heat-graph}) on $G$ admits a unique solution if and
only if the initial value problem represented by
(\ref{eqn:changeheat11})  admits a unique solution.

\begin{proof}
By Lemma~\ref{lem:convertheat1},   (\ref{eqn:changeheat11})
 admits a unique solution $\widetilde{u}(x,n)$  with  $\widetilde{u}(x,n)$ being a constant on
 every left coset of $H$. Hence,
(\ref{eqn:changeheat11}) can be rewritten as
\begin{equation}\label{eqn:defu}
\begin{split}
\frac{1}{|H|} \left( k|H| \  \widetilde{u}(x,n)  \ -\ |H|\  \sum_{i=1}^k  \  \widetilde{u}(x h s_i, n)  \right)
         &+ \widetilde{u}(x,n+1)  - \widetilde{u}(x,n)=0, \\
\widetilde{u}(x,0)&=\widetilde{f}(x).
\end{split}
\end{equation}
To proceed further, we define $u:V(G)\times \Z_+\rightarrow
~\C$ by $ u(b H,n)=\widetilde{u}(x,n),$ whenever $x \in bH.$ Then,
 (\ref{eqn:defu}) can  be rewritten in the form
\begin{equation*}\label{eqn:convert1}
k \  u(\cX,n)\ -\ \sum_{i=1}^k  \  u(\cX s_i, n) +  u(\cX,n+1) -u(\cX,n)=0,  \ \mbox{and} \
u(\cX,0)=f(\cX),
\end{equation*}
where $\cX(=bH, {\mbox{say}})$ represents some left coset of $H$ in $\Gamma$, {\it i.e.}
$u(\cX,n)$ is a solution to (\ref{eqn:heat-graph}) on $G$. Since,  $\widetilde{u}(x,n)$ is a unique solution
to (\ref{eqn:changeheat11}), so uniqueness of $u(\cX,n)$ follows.

Similarly, for a  given solution $u(\cX,n)$ to (\ref{eqn:heat-graph}) on $G$, we define
$\widetilde{u}:V(\widetilde{G})\times \Z_+\rightarrow \C$ by $ \widetilde{u}(x,n) = u(bH,n),$
whenever $x \in b H.$ Then, it is  easily seen that
 $ \widetilde{u}(x,n)$ gives a solution to (\ref{eqn:changeheat11}) and the uniqueness
 follows  from Lemma~{\ref{lem:convertheat1}}.
\end{proof}
\end{theorem}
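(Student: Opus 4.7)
The plan is to reduce the iff statement to a bijection between solutions of (\ref{eqn:heat-graph}) on the coset graph $G$ and solutions of the rescaled equation (\ref{eqn:changeheat11}) on the blowup graph $\widetilde{G}$, using the structural fact from Lemma~\ref{lem:convertheat1} that every solution on $\widetilde{G}$ is constant on each left coset of $H$.

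For the forward direction, suppose $u(\mathcal{X}, n)$ solves (\ref{eqn:heat-graph}) on $G$ with $u(\mathcal{X}, 0) = f(\mathcal{X})$. Lift it to $\widetilde{u}(x, n) := u(bH, n)$ whenever $x \in bH$; then $\widetilde{f} \in L^2(\Gamma)$ since $\|\widetilde{f}\|_{L^2(\Gamma)}^2 = |H|\,\|f\|_{L^2(V)}^2 < \infty$. Let $\widetilde{S} = \{s_1, \ldots, s_k\}$ be a system of distinct right coset representatives as in Remark~\ref{rem:coset1}. Because $\Gamma$ is abelian, the $k|H|$ neighbors of $x \in bH$ in $\widetilde{G}$ partition exactly into the $k$ cosets $bs_1H, \ldots, bs_kH$, on each of which $\widetilde{u}$ is constant. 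A direct computation then yields
\[
\tfrac{1}{|H|}\,\Delta_{\widetilde{G}}\widetilde{u}(x, n) \;=\; k\,u(bH, n) \;-\; \sum_{i=1}^{k} u(bs_iH, n) \;=\; \Delta_G u(bH, n),
\]
so $\widetilde{u}$ satisfies (\ref{eqn:changeheat11}) with initial datum $\widetilde{f}$. Conversely, starting from the unique solution $\widetilde{u}$ of (\ref{eqn:changeheat11}) furnished by Lemma~\ref{lem:convertheat1} and using that it is constant on each coset of $H$, the descent $u(bH, n) := \widetilde{u}(x, n)$ (for any $x \in bH$) is well-defined, and running the identity backwards shows that $u$ solves (\ref{eqn:heat-graph}) on $G$.

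Uniqueness propagates through the same lift-and-descend procedure: if $u_1$ and $u_2$ are two solutions on $G$ sharing the initial datum $f$, their lifts both solve (\ref{eqn:changeheat11}) with the common initial datum $\widetilde{f}$, and Lemma~\ref{lem:convertheat1} forces $\widetilde{u_1} = \widetilde{u_2}$, hence $u_1 = u_2$. The real content of the argument is the Laplacian identity displayed above; while mechanical in the abelian setting of this section, it rests crucially on the abelian hypothesis, because only then does $s_iH = Hs_i$ hold and the neighborhood of $x$ in $\widetilde{G}$ split cleanly into the cosets $bs_iH$. Without this, the lift of $u$ would not be constant on $H$-cosets and no such bijection could be arranged, so the abelian assumption is precisely where the approach uses the hypotheses of the section; everything else is bookkeeping, with the analytic heavy lifting already packaged inside Lemma~\ref{lem:convertheat1}.
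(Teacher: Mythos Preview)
Your proposal is correct and follows essentially the same approach as the paper: both arguments set up the lift-and-descend bijection between solutions of (\ref{eqn:heat-graph}) on $G$ and coset-constant solutions of (\ref{eqn:changeheat11}) on $\widetilde{G}$, invoking Lemma~\ref{lem:convertheat1} for existence and uniqueness on the $\widetilde{G}$ side. You make the Laplacian identity $\tfrac{1}{|H|}\Delta_{\widetilde{G}}\widetilde{u}=\Delta_G u$ and the role of the abelian hypothesis more explicit than the paper does, but the underlying mechanism is identical.
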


Before proceeding further we first state and prove the following lemma.
\begin{lem}\label{lem:convertwave12}
Let $\Gamma$ be an infinite discrete abelian group,  $H$  a finite subgroup of $\Gamma$ and
$S$   a finite subset of $\Gamma$ such that $S\subset \Gamma \setminus
H$, $S^{-1}= S$ and $H\cup S$ generates $\Gamma$. Let
$G={\mbox{Coset }}(\Gamma,H, S)$ be the coset graph of $\Gamma$ with respect to $H$ and $S$.
Let  $\widetilde{G}$ be the graph as defined in the paragraph leading to Lemma~\ref{lem:convertheat1}
and let $\widetilde{f},\widetilde{g}\in L^1(\Gamma)$. Then, the initial value problem
\begin{equation}\label{eqn:converwave12}
\begin{split}
&\frac{1}{|H|}\Delta_{\widetilde{G}}\widetilde{u}(x,n)+\partial^2
     \widetilde{u}(x,n)=0 \quad on \ V(\widetilde{G})\times \Z_+,\\
&\widetilde{u}(x,0)=\widetilde{f}(x), \
           \partial_n \widetilde{u}(x,0)=\widetilde{g}(x);\\
\end{split}
\end{equation}
admits a unique solution if and only if $\ \widehat{\widetilde{g}}(\gamma_0)=0$,
 where $\gamma_0$ is the trivial character of the group $\Gamma$.
In case the solution exists, one has $\widetilde{u}(x,n)=
{\mbox{constant}}$ on each left coset of $H$.
\end{lem}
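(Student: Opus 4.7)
The plan is to mimic the proof of Theorem~\ref{thm:wave11} in the setting of the $\delta$-regular graph $\widetilde{G}$ (with $\delta = k|H|$) and then transplant the argument of Lemma~\ref{lem:convertheat1} that upgrades the solution to one which is constant on every left coset of $H$. First, using (\ref{eqn:laplacian}) and the description of neighbours in $\widetilde{G}$, I would rewrite (\ref{eqn:converwave12}) as the second-order recurrence
\begin{equation*}
\widetilde{u}(x,n+2) - 2\widetilde{u}(x,n+1) + \bigl[1 + \widetilde{a}_{\rm op}\bigr]\widetilde{u}(x,n) = 0,
\end{equation*}
where $\widetilde{a}_{\rm op}\widetilde{u}(x,n) = k\widetilde{u}(x,n) - \frac{1}{|H|}\sum_{h\in H}\sum_{i=1}^k \widetilde{u}(xhs_i,n)$, with initial data $\widetilde{u}(\cdot,0)=\widetilde{f}$, $\widetilde{u}(\cdot,1)=\widetilde{f}+\widetilde{g}$. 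Before applying the Fourier transform I would first establish an $L^1$-analog of Lemma~\ref{lem:sol-wave-u} for $\widetilde{G}$ by an identical induction, so that any solution automatically lies in $L^1(\Gamma)$ for every $n$; this guarantees continuity of $\widehat{\widetilde{u}}(\cdot,n)$ on the compact group $\widehat{\Gamma}$ via Remark~\ref{rem:hat-f-cont} and legitimises Fourier inversion.

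Applying $\mathfrak{F}$ to the recurrence produces the pointwise (in $\gamma$) linear recursion of Theorem~\ref{thm:wave11} with $a$ replaced by $\widetilde{a}(\gamma)= k - \frac{1}{|H|}\sum_{h\in H}\sum_{i=1}^k \gamma\bigl((hs_i)^{-1}\bigr)$, whose general solution is
\begin{equation*}
\widehat{\widetilde{u}}(\gamma,n) = \lambda_\gamma\bigl(1+\sqrt{-\widetilde{a}(\gamma)}\bigr)^n + \mu_\gamma\bigl(1-\sqrt{-\widetilde{a}(\gamma)}\bigr)^n.
\end{equation*}
The initial conditions force $\lambda_\gamma + \mu_\gamma = \widehat{\widetilde{f}}(\gamma)$ and $(\lambda_\gamma - \mu_\gamma)\sqrt{-\widetilde{a}(\gamma)} = \widehat{\widetilde{g}}(\gamma)$. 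Now at the trivial character $\gamma_0$, Proposition~\ref{prop:char-orthogonal} (or a direct computation) gives $\widetilde{a}(\gamma_0) = k - \frac{1}{|H|}\cdot k|H| = 0$, so the second equation degenerates to $0 = \widehat{\widetilde{g}}(\gamma_0)$; conversely, when $\widehat{\widetilde{g}}(\gamma_0)=0$ the system is consistent with $\lambda_{\gamma_0}+\mu_{\gamma_0}=\widehat{\widetilde{f}}(\gamma_0)$ free and a unique continuous extension on the complement, exactly as in Theorem~\ref{thm:wave11}.

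Expanding by the binomial theorem and taking the inverse Fourier transform then yields
\begin{equation*}
\widetilde{u}(x,n) = \widetilde{F}_n * \widetilde{f}(x) + \widetilde{G}_n * \widetilde{g}(x),
\end{equation*}
where $\widetilde{F}_n$ and $\widetilde{G}_n$ are the analogues of the kernels in Theorem~\ref{thm:wave11} with $\mathfrak{F}^{-1}(a)$ replaced by $\mathfrak{F}^{-1}(\widetilde{a})$. To obtain the coset-constancy assertion I would quote from the proof of Lemma~\ref{lem:convertheat1} that
\begin{equation*}
\mathfrak{F}^{-1}(\widetilde{a})(x) = k\,\chi_{\{e_\Gamma\}}(x) - \tfrac{1}{|H|}\sum_{i=1}^k \chi_{s_i H}(x),
\end{equation*}
and the fact (established there) that convolution of $\mathfrak{F}^{-1}(\widetilde{a})$ with any function constant on left cosets of $H$ produces another such function; since $\widetilde{f}$ and $\widetilde{g}$ are constant on cosets by their definition in~(\ref{eqn:newf:1}), an induction on the number of convolutions makes each summand of $\widetilde{F}_n*\widetilde{f}$ and $\widetilde{G}_n*\widetilde{g}$ constant on cosets, hence so is $\widetilde{u}(\cdot,n)$.

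The principal obstacle I anticipate is pinning down the consistency condition $\widehat{\widetilde{g}}(\gamma_0)=0$ cleanly and making sure that at the single degenerate character the free parameter $\lambda_{\gamma_0}$ does not destroy either uniqueness or continuity of $\widehat{\widetilde{u}}(\cdot,n)$. Since $\widehat{\widetilde{u}}(\gamma_0,n)=\widehat{\widetilde{f}}(\gamma_0)$ is forced and the formula $\widehat{\widetilde{u}}(\gamma,n)=\bigl[\sum_{i\le n/2}\binom{n}{2i}(-\widetilde{a})^i\bigr]\widehat{\widetilde{f}} + \bigl[\sum_{i\le (n-1)/2}\binom{n}{2i+1}(-\widetilde{a})^i\bigr]\widehat{\widetilde{g}}$ is a trigonometric polynomial in $\widetilde{a}$ and therefore continuous everywhere, this issue is resolved uniformly; uniqueness of the inverse Fourier transform then delivers uniqueness of $\widetilde{u}$.
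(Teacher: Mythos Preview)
Your proposal is correct and follows essentially the same route as the paper: apply the Fourier transform to reduce to the recurrence of Theorem~\ref{thm:wave11} with $a$ replaced by $\widetilde{a}$, deduce the consistency condition $\widehat{\widetilde{g}}(\gamma_0)=0$, write the solution as $\widetilde{F}_n*\widetilde{f}+\widetilde{G}_n*\widetilde{g}$, and then invoke the computation of $\mathfrak{F}^{-1}(\widetilde{a})$ and the coset-constancy argument from Lemma~\ref{lem:convertheat1}. You are in fact more explicit than the paper about the preliminary $L^1$-induction and the handling of the degenerate character $\gamma_0$, but the overall strategy is identical.
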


\begin{proof}
Let $\{H s_1, H s_2, \ldots, H s_k \}$ be the set of all  distinct
right cosets of $H$ in $\Gamma$, where $s_i \in S,  1\leq i\leq k$. Using
(\ref{eqn:laplacian}) and  applying the Fourier transform on~(\ref{eqn:converwave12})
with respect to group $\Gamma$, we get
\begin{equation*}
\begin{split}
 \widehat{\widetilde{u}}&(\gamma, n+2)- 2\widehat{\widetilde{u}}(\gamma, n+1)-
         \left[-\widetilde{a}(\gamma)-1\right]\widehat{\widetilde{u}}(\gamma,n)=0, \\
 \widehat{\widetilde{u}}&(\gamma,0)=\widehat{\widetilde{f}}(\gamma)  \; {\mbox{ and }} \;
   \widehat{\widetilde{u}}(\gamma,1)-
   \widehat{\widetilde{u}}(\gamma,0)=\widehat{\widetilde{g}}(\gamma),
\end{split}
\end{equation*}
where $\widetilde{a}(\gamma)=k-\frac{1}{|H|}\sum_{h \in H}
\sum_{i=1}^k\gamma((hs_i)^{-1}).$ Using arguments similar to those in
the proof of Theorem~\ref{thm:wave11}, the above equation has a unique
solution if and only if $\ \widehat{\widetilde{g}}(\gamma_0)=0$. In case
the solution exists, it has the form
\begin{equation}\label{eqn:systemeqn}
\left.\begin{array}{l} \hspace*{1in} \widetilde{u}(x,n) = \widetilde{F}_n*\widetilde{f}(x)+ \widetilde{G}_n*\widetilde{g}(x), {\mbox{  where }}  \\
 \widetilde{F}_n(x)
 =  \chi_{\{id_\Gamma\}}(x) \ +\ \sum_{i=1}^{[\frac{n}{2}] } \ (-1)^i \  {n \choose 2i}\ \underbrace{\mathfrak{F}^{-1}(\widetilde{a})*\cdots*
 \mathfrak{F}^{-1}(\widetilde{a})}_{i \ times}(x),\\
\widetilde{G}_n(x)  =  n\chi_{\{id_\Gamma\}}(x) \ +\ \sum_{i=1}^{[\frac{n-1}{2}] } \ (-1)^i \ {n \choose 2i+1} \
\underbrace{\mathfrak{F}^{-1}(\widetilde{a})*\cdots*\mathfrak{F}^{-1}(\widetilde{a})}_{i \ times}(x). \end{array} \right\}
\end{equation}
Now, from Lemma~\ref{lem:convertheat1},   we see that $\mathfrak{F}^{-1}(\widetilde{a})(x)\ =
k \ \chi_{\{id_{\Gamma}\}}(x)- \frac{1}{|H|}\ \sum_{i=1}^k \chi_{s_i H}(x)$ and for
any $x,z \in~b H$, $\mathfrak{F}^{-1}(\widetilde{a}*\widetilde{f})(x)=\mathfrak{F}^{-1}
(\widetilde{a}*\widetilde{f})(z)$ and hence the desired result follows.
\end{proof}

This brings us to the final result of this section which  deals with the
combinatorial wave equation on infinite coset graphs. We omit the proof
as the proof is  similar to the proof of  Theorem~\ref{thm:heat21} and
also  in  view of Lemma~\ref{lem:convertwave12} which forms the initial
step in  the proof.

\begin{theorem}\label{thm:wave21}
Let $\Gamma$ be an infinite discrete abelian group,  $H$ be a finite subgroup of $\Gamma$ and let
$S$ be  a finite subset of $\Gamma$ such that $S\subset \Gamma \setminus
H$, $S^{-1}= S$ and $H\cup S$ generates $\Gamma$. Let
$G={\mbox{Coset }}(\Gamma,H, S)$ be the coset graph of $\Gamma$ with respect to
$H$ and $S$.
Given $f,g\in L^1(V)$ the combinatorial wave equation
\begin{equation*}\label{eqn:wave21}
\begin{split}
&\Delta_G u(\cX,n)+\partial_n^2 u(\cX,n)=0 \quad on \ V(G)\times \Z_+,\\
& u(\cX,0)=f(\cX), \ \partial_n u(\cX,0)=g(\cX);\\
\end{split}
\end{equation*}
has a solution if and only if $\ \widehat{\widetilde{g}}(\gamma_0)=0$,
 where $\gamma_0$ is the trivial character of the group $\Gamma$, and
 $\widetilde{g}:\Gamma \rightarrow \C$ is defined by
$\widetilde{g}(x)\ = \ g(bH), \  {\mbox{ whenever }} \ x \in bH.$
\end{theorem}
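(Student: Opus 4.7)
The plan is to deduce Theorem~\ref{thm:wave21} from Lemma~\ref{lem:convertwave12} by setting up a bijection between solutions of the combinatorial wave equation on the coset graph $G$ and solutions of the lifted initial value problem~(\ref{eqn:converwave12}) on $\widetilde{G}$ that are constant on every left coset of $H$. The strategy mirrors the proof of Theorem~\ref{thm:heat21}, with Lemma~\ref{lem:convertwave12} replacing Lemma~\ref{lem:convertheat1} as the driving existence/uniqueness input.

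First, assuming a solution $u(\cX,n)$ to the wave equation on $G$ exists, I would lift it to $\widetilde{G}$ via $\widetilde{u}(x,n):=u(bH,n)$ whenever $x\in bH$, and analogously define $\widetilde{f},\widetilde{g}$ from $f,g$ using~(\ref{eqn:newf:1}). Coset-constancy of $\widetilde{u}$ makes the neighbor sum appearing in $\Delta_{\widetilde{G}}\widetilde{u}(x,n)$ collapse: the $|H|$ neighbors of $x$ inside each left coset adjacent to $bH$ contribute equal values of $\widetilde{u}$. The prefactor $\tfrac{1}{|H|}$ thus converts $\tfrac{1}{|H|}\Delta_{\widetilde{G}}\widetilde{u}(x,n)$ into $\Delta_G u(bH,n)$, showing that $\widetilde{u}$ solves~(\ref{eqn:converwave12}) with initial data $\widetilde{f},\widetilde{g}$. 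Lemma~\ref{lem:convertwave12} then forces $\widehat{\widetilde{g}}(\gamma_0)=0$, which yields the necessity.

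Conversely, if $\widehat{\widetilde{g}}(\gamma_0)=0$, Lemma~\ref{lem:convertwave12} produces a unique solution $\widetilde{u}$ of~(\ref{eqn:converwave12}) that is constant on each left coset of $H$. Consequently $u(bH,n):=\widetilde{u}(x,n)$ (for any $x\in bH$) is a well-defined function on $V(G)\times\Z_+$, and reversing the Laplacian reduction shows that $u$ satisfies $\Delta_G u(\cX,n)+\partial_n^2 u(\cX,n)=0$ with $u(\cX,0)=f(\cX)$ and $\partial_n u(\cX,0)=g(\cX)$. Uniqueness of $u$ is inherited directly from the uniqueness clause of Lemma~\ref{lem:convertwave12}.

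The main obstacle is the bookkeeping in the Laplacian reduction: one must verify carefully that the double sum $\sum_{h\in H}\sum_{i=1}^{k}\widetilde{u}(xhs_i,n)$ arising in $\Delta_{\widetilde{G}}\widetilde{u}(x,n)$ equals $|H|\sum_{i=1}^{k}u(bHs_iH,n)$, i.e., $|H|$ times the sum of $u$ over the $G$-neighbors of $\cX=bH$. This relies on the choice of $\widetilde{S}=\{s_1,\ldots,s_k\}$ from Remark~\ref{rem:coset1}, which guarantees that the $s_i$ represent distinct right cosets of $H$, so that as $(h,i)$ ranges over $H\times\{1,\ldots,k\}$ the elements $xhs_i$ enumerate the $\widetilde{G}$-neighbors of $x$ with exactly the correct multiplicity $|H|$ per adjacent coset, making the two formulations of the wave equation genuinely equivalent under the coset-constancy constraint.
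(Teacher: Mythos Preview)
Your proposal is correct and follows precisely the approach the paper intends: the paper omits the proof but states that it is ``similar to the proof of Theorem~\ref{thm:heat21}'' with Lemma~\ref{lem:convertwave12} playing the role of Lemma~\ref{lem:convertheat1}, which is exactly the lift-and-descend bijection you describe. Your extra care with the Laplacian bookkeeping (the $|H|$-to-$1$ collapse of the neighbor sum under coset-constancy) makes explicit the step the paper leaves implicit in the proof of Theorem~\ref{thm:heat21}.
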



\section{Combinatorial Heat and Wave Equations on Regular Trees}\label{sec:reg-tree}
In this Section, we solve the combinatorial heat and wave equations on a
$k$-regular tree, denoted $\tree$, which can be identified with an infinite
Cayley graph $G=\mbox{Cay}(\Gamma,S)$, where $\Gamma$ is a non-abelian free
group with generators $s_1,  \ldots, s_k$, each of order $2$.

A similar problem related to wave equation on $k$-regular
trees have also been  in~\cite{Cohen,Medolla}.
In particular, the authors in \cite{Medolla} looked at the normalized Laplacian operator $\mathcal{L}$ defined by
$$\mathcal{L}f(x)=\frac{1}{k}\sum_{y\sim x}[f(x)-f(y)] \ \mbox{ for all }\  x\in V(\tree)$$
and studied the problem $-(\mathcal{L}-b)u(x,t)=\partial_t^2 u(x,t)  {\mbox{ on }} V(\tree) \times [0,\infty)$ with initial conditions
$u(x,0)=f(x),  \partial_t   u(x,0)=g(x)$,
where $b\geq0$ and $\partial_t$ denotes the partial derivative with respect to the
continuous variable $t$. In~\cite{Cohen}, the authors studied a similar problem
with discrete time variable. They considered  the problem
$-2 \mathcal{L}u(x,n)=u(x,n+1)+u(x,n-1)-2u(x,n)  {\mbox{ on }} V(\tree) \times [0,\infty)$ with initial conditions $u(x,0)=f(x),    u(x,1)=g(x).$

Our approach to  (\ref{eqn:heat-graph}) and (\ref{eqn:wave-graph})
is analogous to the approach in the classical case of solving the wave equation
on Euclidean space $\R^n$. The structure of $\tree$ and the techniques we have
used in Section~$2$, allow us to adopt such an approach. For details of the
classical approach, see~\cite[pages~$166-169$]{Folland}.

\subsection{ Notations and Preliminary  Results}
\label{sec:Prelims On T}

Let  $\path = (V(\path),E(\path))$ be the infinite path with $V(\path) = \Z$ and  $E(\path) = \{ \{r,r+1\}: r \in \Z\}$.
Thus, for $k=2$ the tree $\tree$ is same as the path $\path$. Also, note that  $\tree$ is a metric space with respect to the metric $d_{\tree}$, in short $d$. If  $Aut(\tree)$ is the automorphism group of  $\tree$ then  $Aut(\tree)$  corresponds to the group of isometries of the metric space $\tree$. Also, the symmetry of $\tree$ implies that for $x,y \in V(\tree)$, there exist a $\varphi \in Aut(\tree)$ such that $\varphi(x)=y$, {\it i.e.},
$\tree$ is a vertex-transitive graph.

Now, for a fixed  vertex $x \in V(\tree)$ and  $r \in \Z_+$, let $S(x,r) = \{z \in V(\tree): d(z,x)=r\}$ be the boundary of the ball of radius $r$ centered
at  $x$. Then, symmetry of  $\tree$ implies that $|S(x,r)|$ is independent of  $x$ and hence, we write $ |S(x,r)| = S(r)$. Thus, for  $x\in V(\tree)$,
$$ S(r) = |S(x,r)|= \Big|\{z \in V(\tree): d(z,x)=r\}\Big|=\left\{\begin{array}{ll}
                                        1, &  r= 0, \\
                                        k(k-1)^{r-1}, & r>0.
                                    \end{array}\right.$$
Also, recall that for a fixed $r \in \Z_+$ and
$\varphi:V(\tree)\rightarrow \C$, the spherical mean  of $\varphi$ is defined as\label{page:sp-mean}
$$ M_{\varphi}(x,r)=\frac{1}{S(r)}\sum_{d(z,x)=r}\varphi(z) \; {\mbox{ for all }} \; x \in V(\T).$$
Now,  for each $x \in V(\tree)$, we extend the definition of  $M_{\varphi}(x,r)$, as an even function, to all $r\in \Z$. That is, for $\varphi:V(\tree)\rightarrow \C$, we have a map
$M_\varphi: V(\tree)\times V(\path)\rightarrow \C$ defined by
$M_{\varphi}(x,r) = M_{\varphi}(x,- r)$ for all $r \in \Z$.
With the notations and definitions as above, we state our next  result.

\begin{lem}\label{lem:mu-l1}
For $1\leq p <\infty$, if $\varphi \in L^p(V(\tree))$ then $M_{\varphi}(x,\cdot) \in  L^p(\Z)$ for all $x \in V(\tree)$.
\end{lem}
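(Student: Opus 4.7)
The plan is to bound the $L^p(\Z)$-norm of $M_\varphi(x,\cdot)$ directly by the $L^p(V(\tree))$-norm of $\varphi$, up to a universal constant. First I would exploit the evenness of $M_\varphi(x,r)$ in the variable $r$ to reduce the sum $\sum_{r\in\Z}|M_\varphi(x,r)|^p$ to twice the sum over $r\in\Z_+$ (plus/minus an adjustment for the $r=0$ term). This keeps the computations aligned with the definition of spherical means given on the sphere $S(x,r)$, which is indexed by $r\geq 0$.

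The heart of the proof is Jensen's inequality (equivalently, the power-mean inequality for $p\geq 1$) applied to the finite sphere $S(x,r)$: for each $r\geq 1$,
\begin{equation*}
|M_\varphi(x,r)|^p=\left|\frac{1}{S(r)}\sum_{d(z,x)=r}\varphi(z)\right|^p\leq \frac{1}{S(r)}\sum_{d(z,x)=r}|\varphi(z)|^p.
\end{equation*}
Since $S(r)\geq 1$ for every $r\in\Z_+$, I can further bound the right-hand side by $\sum_{d(z,x)=r}|\varphi(z)|^p$. The case $r=0$ is trivial, since $M_\varphi(x,0)=\varphi(x)$.

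Summing this bound over $r\in\Z_+$ and using the fact that the spheres $\{S(x,r):r\in\Z_+\}$ partition $V(\tree)$, I obtain
\begin{equation*}
\sum_{r\in\Z}|M_\varphi(x,r)|^p \;\leq\; 2\sum_{r=0}^{\infty}\sum_{d(z,x)=r}|\varphi(z)|^p \;=\; 2\sum_{z\in V(\tree)}|\varphi(z)|^p \;=\; 2\|\varphi\|_{L^p(V(\tree))}^p,
\end{equation*}
which is finite by hypothesis. Hence $M_\varphi(x,\cdot)\in L^p(\Z)$ for every $x\in V(\tree)$.

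There is no real obstacle here; the argument is essentially a one-line application of Jensen's inequality combined with the observation that the spheres centered at $x$ give a disjoint partition of the tree. The only minor bookkeeping is the even-extension step and being careful that the factor $1/S(r)$, which is crucial for $M_\varphi$ to be a mean, can be discarded in the upper bound precisely because $S(r)\geq 1$.
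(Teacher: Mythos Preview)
Your proof is correct and essentially the same as the paper's: both reduce to $r\ge 0$ by evenness, bound $|M_\varphi(x,r)|^p$ by $\frac{1}{S(r)}\sum_{d(z,x)=r}|\varphi(z)|^p$, discard the factor $1/S(r)\le 1$, and sum over the disjoint spheres. The only cosmetic difference is that the paper phrases the key inequality via H\"older (with exponents $p,p'$) and treats $p=1$ separately, whereas you invoke Jensen's inequality uniformly for all $p\ge 1$---these are the same estimate.
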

\begin{proof}
Fix $x \in V(\tree)$ and let $1< p<\infty$ with $\frac{1}{p}+\frac{1}{p'}=1$. Then,
by the definition of $ M_{\varphi}$
\begin{eqnarray}\label{eqn:lp-inq}
\sum_{r\in \Z}|M_{\varphi}(x,r)|^p
\leq  2\sum_{r=0}^{\infty}|M_{\varphi}(x,r)|^p
              =  2\sum_{r=0}^{\infty}\left|\frac{1}{S(r)}
                     \sum_{d(z,x)=r}\varphi(z)\right|^p.
\end{eqnarray}
Now, by Holder's inequality, we have
\begin{eqnarray*}
\sum_{d(x,z)=r}|\varphi(z)|
              \leq \biggl(\sum_{d(x,z)=r}1\biggr)^{1/p'}
                     \biggl(\sum_{d(x,z)=r}|\varphi(z)|^p\biggr)^{1/p} 
              =(S(r))^{1/p'} \biggl(\sum_{d(x,z)=r}|\varphi(z)|^p\biggr)^{1/p}.
\end{eqnarray*}
Thus, from (\ref{eqn:lp-inq}), we obtain \vspace{-.2cm}
\begin{eqnarray*}
\sum_{r\in \Z}|M_{\varphi}(x,r)|^p
              &\leq& 2\sum_{r=0}^{\infty} \frac{1}{(S(r))^p}(S(r))^{\frac{p}{p'}}
                         \sum_{d(x,z)=r}|\varphi(z)|^p
              =2\sum_{r=0}^{\infty} (S(r))^{-p\left(1-\frac{1}{p'}\right)}
                           \sum_{d(x,z)=r}|\varphi(z)|^p\\
              &=&2\sum_{r=0}^{\infty} (S(r))^{-1} \sum_{d(x,z)=r}|\varphi(z)|^p
              \leq 2\sum_{r=0}^{\infty}
                           \sum_{d(x,z)=r}|\varphi(z)|^p
                           = 2\sum_{z \in V(\tree)}|\varphi(z)|^p < \infty.
\end{eqnarray*}
Similarly, for $p=1$, we  have
\begin{eqnarray*}
\sum_{r\in \Z}|M_{\varphi}(x,r)|
              \leq  2\sum_{r=0}^{\infty}\frac{1}{S(r)}
                     \sum_{d(x,z)=r}|\varphi(z)|
                     \leq  2\sum_{r=0}^{\infty} \sum_{d(z,x)=r}|\varphi(z)|
              = 2\sum_{z \in V(\tree)}|\varphi(z)| < \infty.
\end{eqnarray*}
Hence the desired result follows.
\end{proof}

To proceeding further, we extend the definition of the difference operator $\partial_n$ (see~Definition~\ref{def:diff-opr}) to an operator  $\partial_n : \Z \rightarrow \C$ by
$\partial_n v(n)=v(n+1)-v(n)$ for all $n\in \Z$.
  The next result
establishes an important relation between $\Delta_\tree$ and $\Delta_\path$. This result can be viewed as  an analogue
of ``Darboux equation" (for the classical case, see~\cite[ Theorem $6.2.1$]{jost}).

\begin{theorem}\label{thm:lap-convert}
Let $\tree$ and  $\path$ be defined as above.
If  $\varphi$ is a complex valued function on $V(\tree)$ then the spherical
mean $M_{\varphi}$ satisfies the following equation
$$(\Delta_\tree M_{\varphi})(x,r)= ((\Delta_\path + (2-k)\partial_r) M_{\varphi}) (x,r) {\mbox{ for all }} x \in V(\tree), r \in \Z_+.$$
\end{theorem}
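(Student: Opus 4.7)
\textit{Proof proposal.} My plan is to expand both sides using the definitions and reduce the identity to a combinatorial statement about the $r$-spheres around neighbors of $x$, which then follows from the unique-path property of the tree.

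First, I will compute the right-hand side. By the definitions of $\Delta_\path$ and $\partial_r$, one has
\begin{equation*}
(\Delta_\path + (2-k)\partial_r)M_\varphi(x,r) = k\,M_\varphi(x,r) - (k-1)\,M_\varphi(x,r+1) - M_\varphi(x,r-1),
\end{equation*}
while the definition of $\Delta_\tree$ gives
\begin{equation*}
\Delta_\tree M_\varphi(x,r) = k\,M_\varphi(x,r) - \sum_{y\sim x} M_\varphi(y,r).
\end{equation*}
Thus the theorem reduces to proving the mean-value identity
\begin{equation*}
\sum_{y\sim x} M_\varphi(y,r) = (k-1)\,M_\varphi(x,r+1) + M_\varphi(x,r-1) \qquad \text{for all } r\in\Z_+.
\end{equation*}

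The main geometric input is the following partition argument, which is where I expect the real work to happen. Fix $y\sim x$ and $r\ge 1$. Since $\tree$ is a tree, for any $z\in V(\tree)$ the unique $z$--$x$ path either contains $y$ or does not. In the first case $d(z,x)=d(z,y)+1$; in the second case the $z$--$y$ path must pass through $x$, so $d(z,y)=d(z,x)+1$. Consequently, the sphere $\{z:d(z,y)=r\}$ splits as the disjoint union of $A(y,r)=\{z:d(z,x)=r+1\text{ and the path from }z\text{ to }x\text{ passes through }y\}$ and $B(y,r)=\{z:d(z,x)=r-1\text{ and the path from }z\text{ to }y\text{ passes through }x\}$. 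Summing over $y\sim x$, each $z$ with $d(z,x)=r+1$ lies in exactly one $A(y,r)$ (corresponding to the neighbor of $x$ on the $x$--$z$ path), and each $z$ with $d(z,x)=r-1$ (with $r\ge 2$) lies in exactly $k-1$ of the sets $B(y,r)$ (namely all neighbors of $x$ except the one on the $x$--$z$ path). This will give, after dividing by $S(r)=k(k-1)^{r-1}$ and using $S(r\pm 1)=k(k-1)^{r-1\pm 1}$, the desired identity for $r\ge 2$.

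The cases $r=0$ and $r=1$ will be handled by direct verification. For $r=1$ the counting above still applies, except that the single vertex $z=x$ at distance $0$ from $x$ lies in $B(y,1)$ for all $k$ neighbors $y$; a short calculation shows that the factor $S(0)=1$ (in place of $k(k-1)^{-1}$) absorbs this discrepancy so the same formula holds. For $r=0$ one has $M_\varphi(y,0)=\varphi(y)$, and invoking the even extension $M_\varphi(x,-1)=M_\varphi(x,1)=\frac{1}{k}\sum_{y\sim x}\varphi(y)$, the right-hand side collapses to $((k-1)/k+1/k)\sum_{y\sim x}\varphi(y)=\sum_{y\sim x}\varphi(y)$, matching the left. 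This last case is the one place where the evenness extension of $M_\varphi$ in the second variable plays an essential role, and verifying the boundary cases consistently is where I expect the bookkeeping to be most delicate.
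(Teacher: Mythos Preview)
Your proposal is correct and follows essentially the same route as the paper: both reduce the identity to computing $\sum_{y\sim x}\sum_{d(z,y)=r}\varphi(z)$ via the tree decomposition of $S(y,r)$ into pieces at distance $r\pm 1$ from $x$, obtaining one copy of $S(x,r+1)$ and $(k-1)$ copies of $S(x,r-1)$. Your treatment is in fact slightly more careful than the paper's, since you explicitly verify the boundary cases $r=0$ (where the even extension of $M_\varphi$ is needed) and $r=1$ (where $x$ is counted $k$ rather than $k-1$ times, compensated by $S(0)=1$), whereas the paper's intermediate formula glosses over these.
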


\begin{proof}
Fix $x\in V(\tree)$ and $r \in \Z_+$. Then,  using (\ref{eqn:laplacian})
and the definition of $M_{\varphi}$, we have
\begin{eqnarray}\label{eqn:lap-TP}
(\Delta_\tree M_{\varphi})(x,r)
= k \;M_{\varphi}(x,r)- \sum_{y\sim x}
\biggl(\frac{1}{|S(r)|}\sum_{d(z,y)=r}\varphi(z)\biggr).
\end{eqnarray}
To compute the second term of (\ref{eqn:lap-TP}), we observe the following.
Let $y_1, \ldots, y_k$ be the vertices of $\tree$ that are adjacent to $x$.
Then, for each fixed $i, 1 \le i \le k$, a vertex $z \in S(y_i, r)$, if the path
from $y_i$ to $z$ passes through the vertex $x$ and $z \in S(x,r-1)$ or the path
from $y_i$ to $z$ does not pass through $x$ and $z \in S(x,r+1)$, {\it i.e.}, for
each fixed $i, 1 \le i \le k$, one has
$$S(y_i, r) = \left\{S(x,r-1) \setminus S(y_i,r-2)\right\} \cup
   \left\{ S(x,r+1) \setminus \left(\cup_{j\ne i} S(y_j, r) \right)\right\}.$$
Thus, as we sum over all $y \sim x$ in the second term of (\ref{eqn:lap-TP}),
we get exactly $(k-1)$ copies of $S(x,r-1)$ and one copy of $S(x,r+1)$. Hence,
with these observations  (\ref{eqn:lap-TP}) reduces to
\begin{eqnarray}
\Delta_\tree(M_{\varphi}(x,r))
               &=& k\;M_{\varphi}(x,r)- \frac{1}{S(r)}
                    \left[\sum_{d(z,x)=r+1}\varphi(z)
                      +(k-1)\sum_{d(z,x)=r-1}\varphi(z) \right] \nonumber\\
               &=&  k\;M_{\varphi}(x,r)-\frac{1}{S(r)}\Big[ S(r+1)\;M_{\varphi}(x,r+1)
                      + (k-1)\;S(r-1)\;M_{\varphi}(x,r-1)\Big] \nonumber\\
               &=&  k\;M_{\varphi}(x,r) -(k-1)\;M_{\varphi}(x,r+1) -  M_{\varphi}(x,r-1) \label{eqn:lap2}\\
               &=& \Big[2 M_{\varphi}(x,r)- M_{\varphi}(x,r+1) - M_{\varphi}(x,r-1)\Big]
                   +(k-2)\Big[ M_{\varphi}(x,r)-M_{\varphi}(x,r+1)\Big] \nonumber\\
               &=& \Delta_P(M_{\varphi}(x,r))  -  (k-2)\partial_r (M_{\varphi}(x,r))\nonumber\\
               &=& \Big(\Delta_P + (2-k)\partial_r\Big)(M_{\varphi}(x,r))\nonumber.
\end{eqnarray}
Hence,  we have obtained the desired result.
\end{proof}

\begin{cor}\label{cor:wave-convert}
Suppose $u(x,n)$ is a function on $V(\tree)\times \Z_+$ and $M_u(x,r,n)$ denotes the
spherical mean of the function $x\mapsto u(x,n)$. Then, for all $x\in V(\tree)$, $u$ satisfies
\begin{enumerate}[label={(\arabic*)}]
 \item  the combinatorial  heat equation (\ref{eqn:heat-graph}) on $\tree$ if and only if $M_u$ satisfies
\begin{equation}\label{eqn:heat-convert}
    \Big(\Delta_\path + (2-k)\partial_r\Big)(M_u(x,r,n))+ \partial_{n} M_u(x,r,n)=0,
\end{equation}

\item  the combinatorial wave equation (\ref{eqn:wave-graph}) on $\tree$ if and only if $M_u$ satisfies
\begin{equation}\label{eqn:wave-convert}
    \Big(\Delta_\path + (2-k)\partial_r\Big)(M_u(x,r,n))+ \partial_{n}^2 M_u(x,r,n)=0.
\end{equation}

\end{enumerate}
\end{cor}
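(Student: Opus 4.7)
The plan is to combine Theorem~\ref{thm:lap-convert} with three elementary facts: linearity of the spherical mean $M$, commutativity of $\partial_n$ (and $\partial_n^2$) with $M$ since they act on different variables, and the combinatorial identity $M \circ \Delta_\tree = \Delta_\tree \circ M$. I sketch part~(1); part~(2) is entirely analogous, with $\partial_n$ replaced by $\partial_n^2$ throughout.

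\emph{Forward direction.} Assume $u$ solves~(\ref{eqn:heat-graph}). For each fixed $n$, apply the spherical mean $M$ in the vertex variable to the identity $\Delta_\tree u(\cdot, n) + \partial_n u(\cdot, n) = 0$. Linearity of $M$ together with the commutativity of $\partial_n$ and $M$ yield
\[
M_{\Delta_\tree u(\cdot, n)}(x, r) + \partial_n M_u(x, r, n) = 0.
\]
Invoking the commutativity $M_{\Delta_\tree \varphi}(x, r) = \Delta_\tree[M_\varphi(\cdot, r)](x)$ with $\varphi = u(\cdot, n)$, and then Theorem~\ref{thm:lap-convert},
\[
M_{\Delta_\tree u(\cdot, n)}(x, r) = \Delta_\tree[M_u(\cdot, r, n)](x) = (\Delta_\path + (2-k)\partial_r) M_u(x, r, n),
\]
which gives~(\ref{eqn:heat-convert}).

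\emph{Converse direction.} Evaluate~(\ref{eqn:heat-convert}) at $r = 0$. Since $M_u(x, 0, n) = u(x, n)$ and, by the even extension, $M_u(x, -1, n) = M_u(x, 1, n) = \frac{1}{k}\sum_{y \sim x} u(y, n)$, a direct computation (using $\Delta_\path h(r) = 2h(r) - h(r-1) - h(r+1)$ on the path) yields
\[
(\Delta_\path + (2-k)\partial_r) M_u(x, 0, n) = k\, u(x, n) - \sum_{y \sim x} u(y, n) = \Delta_\tree u(x, n),
\]
while $\partial_n M_u(x, 0, n) = \partial_n u(x, n)$. Hence~(\ref{eqn:heat-convert}) at $r = 0$ is precisely the heat equation~(\ref{eqn:heat-graph}).

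\emph{Main obstacle.} The only nontrivial ingredient is the commutativity $M \circ \Delta_\tree = \Delta_\tree \circ M$. This reduces to the combinatorial identity that for every vertex $w$, the number of $z \in S(x, r)$ adjacent to $w$ equals the number of $y \sim x$ with $d(w, y) = r$; the special cases $r = 1$ and $w = x$ must be handled via the convention $\varphi(x) = M_\varphi(x, 0)$ and the even extension. This is exactly the sphere-counting already carried out in the proof of Theorem~\ref{thm:lap-convert}, so no new ideas are needed beyond what is already in place.
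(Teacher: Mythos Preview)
Your proposal is correct and follows essentially the same route as the paper: establish the commutativity $M_{\Delta_\tree u}=\Delta_\tree M_u$ via the sphere-counting of Theorem~\ref{thm:lap-convert}, note that $\partial_n$ commutes with $M$, and invoke Theorem~\ref{thm:lap-convert}. The paper recomputes $M_{\Delta_\tree u}$ explicitly rather than citing the counting argument, and it leaves the converse implicit, whereas you spell it out by evaluating at $r=0$; these are only presentational differences.
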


\begin{proof}
For any $x\in V(\tree)$ and $r,n \in \Z_+$, note that  using (\ref{eqn:laplacian})
and the definition of $M_{\varphi}$, we have
\begin{eqnarray*}
M_{(\Delta_Tu)}(x,r,n)&=&\frac{1}{S(r)}\sum_{d(z,x)=r}\Delta_T u(z,n)
                    = \frac{1}{S(r)} \sum_{d(z,x)=r}
                         \left(k \;u(z,n)-\sum_{y\sim z}u(y,n) \right)\nonumber\\
                    &=&  \frac{k}{S(r)}\sum_{d(z,x)=r} u(z,n)-
                         \frac{1}{S(r)} \sum_{d(z,x)=r}\sum_{y\sim z}u(y,n)\nonumber.
\end{eqnarray*}
Now an argument that is similar to the argument in the proof of
Theorem~\ref{thm:lap-convert} implies that as we sum over all $y \sim z$
as $z$ varies over $S(x,r)$, we get exactly $(k-1)$ copies of $S(x,r-1)$
and one copy of $S(x,r+1)$. Therefore, the above equation can be rewritten as
\begin{eqnarray}
M_{(\Delta_\tree u)}(x,r,n)&=& k M_u(x,r,n)-\frac{1}{S(r)}\left[\sum_{d(y,x)=r+1}u(y,n)
                               + (k-1) \sum_{d(y,x)=r-1}u(y,n)\right]  \nonumber\\
                           &=&  k\; M_u(x,r,n)- (k-1) M_u(x,r+1,n) - \;M_u(x,r-1,n) \nonumber.
\end{eqnarray}
Thus, using (\ref{eqn:lap2}) and the above, we obtain
$\Delta_\tree(M_u(x,r,n))=M_{(\Delta_\tree u)}(x,r,n)$. On similar lines, it
can be verified that $\partial_{n} M_u(x,r,n)=M_{(\partial_{n} u)}(x,r,n)$ and
$\partial_{n}^2 M_u(x,r,n)=M_{(\partial_{n}^2 u)}(x,r,n)$.
Hence, using Theorem~\ref{thm:lap-convert}, the desired results follow.
\end{proof}

In the next section, we
proceed to solve the combinatorial heat and  wave equations on $\tree$ which as stated
earlier corresponds to  a Cayley graph of an infinite non-abelian free group.

\subsection{Results on Combinatorial Heat and Wave Equations on $\tree$}\label{sec:wave-tree}
We are now interested in solving the combinatorial heat and wave equations
on $\tree$. 
In view of
Corollary~\ref{cor:wave-convert}, we observe that if  $u$ is a solution
to (\ref{eqn:heat-graph}) on $\tree$ then the spherical mean $M_u$ satisfies (\ref{eqn:heat-convert})
with the initial condition $M_u(x,r,0)=M_f(x,r)$. Similarly,  if  $u$ is a solution
to (\ref{eqn:wave-graph})  on $\tree$ then the spherical mean $M_u$ satisfies (\ref{eqn:wave-convert})
with the initial conditions
$M_u(x,r,0)=M_f(x,r), \ \partial_n M_u(x,r,0)= M_g(x,r).$
To simplify our notations, for any fixed $x\in V(\tree)$,  let
\begin{equation}\label{eqn:convert-sol}
\widetilde{u}(r,n)=M_u(x,r,n), \  {\mbox{ for }} r \in \Z, n \in \Z_+.
\end{equation}
Accordingly, we  re-write the initial conditions as
\begin{equation*}\label{eqn:initial-convert}
\widetilde{f}(r)=M_f(x,r)=M_u(x,r,0)= \widetilde{u}(r,0) \ \mbox{and} \
\widetilde{g}(r)=M_g(x,r)=M_{\partial_n u}(x,r,0)= \partial_n \widetilde{u}(r,0).
\end{equation*}

Then, it can easily be observed that  solving the combinatorial heat equation (\ref{eqn:heat-graph})
on $\tree$ is equivalent to solving the initial value problem (IVP) on the  infinite path $\path$
\begin{equation}\label{eqn:convert-heat11}
\begin{split}
(\Delta_\path + (2-k)\partial_r)\widetilde{u}(r,n)+\partial_{n} \widetilde{u}(r,n)
                                          &=0  {\mbox{ on }} V(\path) \times \Z_+,\\
\widetilde{u}(r,0)&=\widetilde{f}(r),
\end{split}
\end{equation}
and  solving the combinatorial wave equation (\ref{eqn:wave-graph})
on $\tree$ is equivalent to solving the IVP
\begin{equation}\label{eqn:convert-wave1}
\begin{split}
(\Delta_\path + (2-k)\partial_r)\widetilde{u}(r,n)+\partial_{n}^2 \widetilde{u}(r,n)                                                              &=0 \ \mbox{on} \ V(\path)\times \Z_+,  \\
\widetilde{u}(r,0)=\widetilde{f}(r),  \  \  \partial_n \widetilde{u}(r,0)&=\widetilde{g}(r).
\end{split}
\end{equation}

We now state a result that
gives information about the solution of (\ref{eqn:convert-heat11}).
\begin{lem}\label{lem:sol-heat-convert-u}
Let $f\in L^2(V(\tree))$ and let  (\ref{eqn:convert-heat11}) admit a
solution $\widetilde{u}(r,n)$. Then $\widetilde{u}(\cdot,n) \in L^2(V(\path))$ for all $n\in \Z_+$.
\end{lem}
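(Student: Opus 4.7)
The plan is to mimic the induction argument from Lemma~\ref{lem:sol-heat-u}, transplanted to the setting of the path $\path$ with the modified operator $\Delta_\path + (2-k)\partial_r$.

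First I would handle the base case $n=0$. By hypothesis $\widetilde{u}(r,0) = \widetilde{f}(r) = M_f(x,r)$ for the fixed $x\in V(\tree)$ used to form the spherical mean. Since $f\in L^2(V(\tree))$, Lemma~\ref{lem:mu-l1} applied with $p=2$ gives $M_f(x,\cdot)\in L^2(\Z) = L^2(V(\path))$, so $\widetilde{u}(\cdot,0)\in L^2(V(\path))$.

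Next I would convert (\ref{eqn:convert-heat11}) into an explicit recursion in $n$. Since $\path$ is $2$-regular, $\Delta_\path \widetilde{u}(r,n) = 2\widetilde{u}(r,n) - \widetilde{u}(r+1,n) - \widetilde{u}(r-1,n)$, while $(2-k)\partial_r \widetilde{u}(r,n) = (2-k)[\widetilde{u}(r+1,n) - \widetilde{u}(r,n)]$. Solving for $\widetilde{u}(r,n+1)$ in the heat equation yields
\begin{equation*}
\widetilde{u}(r,n+1) = (1-k)\,\widetilde{u}(r,n) + (k-1)\,\widetilde{u}(r+1,n) + \widetilde{u}(r-1,n).
\end{equation*}
Since the $L^2(\Z)$-norm is invariant under the index shifts $r\mapsto r\pm 1$, applying the triangle inequality in $L^2(V(\path))$ gives
\begin{equation*}
\|\widetilde{u}(\cdot,n+1)\|_{L^2(V(\path))} \leq \bigl(|1-k| + |k-1| + 1\bigr)\,\|\widetilde{u}(\cdot,n)\|_{L^2(V(\path))} = (2k-1)\,\|\widetilde{u}(\cdot,n)\|_{L^2(V(\path))}.
\end{equation*}

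Finally, I would close the induction: assuming $\widetilde{u}(\cdot,n)\in L^2(V(\path))$, the estimate above shows $\widetilde{u}(\cdot,n+1)\in L^2(V(\path))$ as well, so by induction $\widetilde{u}(\cdot,n)\in L^2(V(\path))$ for every $n\in \Z_+$. There is no real obstacle here; the only step that is not purely formal is the base case, which is handled cleanly by Lemma~\ref{lem:mu-l1}, and the ``inductive propagation'' constant $(2k-1)$ is harmless because we only need finiteness, not uniformity in $n$.
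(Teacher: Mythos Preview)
Your proof is correct and follows essentially the same approach as the paper: both rewrite (\ref{eqn:convert-heat11}) as the recursion $\widetilde{u}(r,n+1)=(k-1)\widetilde{u}(r+1,n)+\widetilde{u}(r-1,n)-(k-1)\widetilde{u}(r,n)$, invoke Lemma~\ref{lem:mu-l1} for the base case, and induct on $n$. You actually supply more detail than the paper, which simply asserts the induction without writing out the $(2k-1)$ norm bound.
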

\begin{proof}
The result is clearly true for $n=0$. Now, using  (\ref{eqn:lap2}), we can re-write (\ref{eqn:convert-heat11}) as
\begin{equation*}\label{eqn:rec-heat:2}
 \widetilde{u}(r, n+1)= (k-1)\widetilde{u}(r+1,n)+\widetilde{u}(r-1,n)-(k-1)\widetilde{u}(r,n) {\mbox{ and}} \;\;
 \widetilde{u}(r,0)=\widetilde{f}(r).
\end{equation*}
 Since $f\in L^2(V(\tree))$ by Lemma~\ref{lem:mu-l1},
 $\widetilde{f}\in L^2{(V(\path))}$.
Therefore, by mathematical induction on  $n$,
 the desired result follows.
\end{proof}
Now we are ready to solve the initial value problem (\ref{eqn:convert-heat11}).
\begin{lem}\label{lem:heat-con-tree}
Let $f\in L^2(V(\tree))$.  Then, the initial value problem (\ref{eqn:convert-heat11})
admits a unique solution
$\widetilde{u}(r,n)=K_n*\widetilde{f}(r),$
where $K_n(r)= \mathfrak{F^{-1}}\left( \sum_{j=0}^{n} {n \choose j}  (-a)^j\right)(r)$
with $a(t)=-(k-1)e^{-it}+k- e^{it}$.
\end{lem}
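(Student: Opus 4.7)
The plan is to mimic the proof of Theorem~\ref{thm:heat11}, applied now to the infinite path $\path$, which is the Cayley graph of the discrete abelian group $\Z$ whose dual group is $\T$. The extra first-order operator $(2-k)\partial_r$ only alters the Fourier symbol; otherwise the scheme is identical.

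First I would convert (\ref{eqn:convert-heat11}) into an explicit recurrence. Expanding $\Delta_\path$ and $\partial_r$, exactly as in the proof of Lemma~\ref{lem:sol-heat-convert-u}, rewrites (\ref{eqn:convert-heat11}) as
\[
\widetilde u(r,n+1) = (k-1)\widetilde u(r+1,n) + \widetilde u(r-1,n) - (k-1)\widetilde u(r,n), \qquad \widetilde u(r,0)=\widetilde f(r).
\]
Since $f\in L^2(V(\tree))$, Lemma~\ref{lem:mu-l1} gives $\widetilde f\in L^2(\Z)$, and Lemma~\ref{lem:sol-heat-convert-u} then guarantees $\widetilde u(\cdot,n)\in L^2(\Z)$ for every $n\in\Z_+$.

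Next I would apply the Fourier transform on $\Z$, whose characters are $\gamma_t(r)=e^{irt}$. By part~(2) of Proposition~\ref{prop:fourier-dis-gr}, the translates $\widetilde u(r\pm 1,\cdot)$ transform to $e^{\mp it}\widehat{\widetilde u}(t,\cdot)$, so the recurrence collapses to the scalar first-order equation
\[
\widehat{\widetilde u}(t,n+1) = \bigl((k-1)e^{-it}+e^{it}-(k-1)\bigr)\widehat{\widetilde u}(t,n) = (1-a(t))\,\widehat{\widetilde u}(t,n),
\]
where $a(t)=-(k-1)e^{-it}+k-e^{it}$. Iterating and expanding the binomial gives
\[
\widehat{\widetilde u}(t,n) = (1-a(t))^n\,\widehat{\widetilde f}(t) = \Bigl(\sum_{j=0}^n \binom{n}{j}(-a(t))^j\Bigr)\widehat{\widetilde f}(t).
\]

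Finally, since $a$ is a trigonometric polynomial on $\T$, so is $\sum_{j=0}^n\binom{n}{j}(-a)^j$; Plancherel combined with part~(1) of Proposition~\ref{prop:fourier-dis-gr} lets me invert, yielding $\widetilde u(r,n)=K_n*\widetilde f(r)$ with $K_n$ as stated, and uniqueness follows from the uniqueness of the Fourier inverse. I do not anticipate a serious obstacle: the only small novelty compared to Theorem~\ref{thm:heat11} is that the drift term $(2-k)\partial_r$ breaks the symmetry, so $a(t)$ is no longer real-valued; but since we merely need $a$ to be a trigonometric polynomial, the argument is unaffected, and by Remark~\ref{rem:four1} the kernel $K_n$ is in fact finitely supported in $[-n,n]\cap\Z$, making $K_n*\widetilde f$ manifestly well-defined for $\widetilde f\in L^2(\Z)$.
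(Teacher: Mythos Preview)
Your proposal is correct and follows essentially the same route as the paper: rewrite (\ref{eqn:convert-heat11}) via (\ref{eqn:lap2}) as a one-step recurrence in $n$, invoke Lemmas~\ref{lem:mu-l1} and~\ref{lem:sol-heat-convert-u} to stay in $L^2(\Z)$, take the Fourier transform in $r$ to obtain $\widehat{\widetilde u}(t,n)=(1-a(t))^n\widehat{\widetilde f}(t)$ with the stated symbol $a$, and invert. The paper compresses the middle step into ``proceeding similar to the proof of Theorem~\ref{thm:heat11}'', whereas you write out the transform of the translates and the iteration explicitly, but the argument is the same.
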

\begin{proof}
Using  (\ref{eqn:lap2}), we can re-write (\ref{eqn:convert-heat11}) as
\begin{equation*}
 k \widetilde{u}(r,n)-(k-1)\widetilde{u}(r+1,n)- \widetilde{u}(r-1,n)
  +  \widetilde{u}(r,n+1)- \widetilde{u}(r,n)=0 \ \mbox{with} \
 \widetilde{u}(r,0)=\widetilde{f}(r).
\end{equation*}
Since $\widetilde{f}\in L^2(V(\path))$,  by Lemma~\ref{lem:sol-heat-convert-u},
$\widetilde{u}(\cdot,n) \in L^2(V(\path))$ for all $n\in \Z_+$. Thus, applying Fourier transform  with respect to  $r\in \Z$  and
proceeding similar to the proof of Theorem~\ref{thm:heat11}, we get
$$\widehat{\widetilde{u}}(\cdot,n)=[1-a(\cdot)]^n \widehat{\widetilde{f}}(\cdot), \
\mbox{where} \ a(t)=-(k-1)e^{-it}+k- e^{it}. $$
Using the inverse Fourier transform, we obtain the desired result.
\end{proof}

Now we compute the trigonometric polynomials $a^j(t)$, for  $j\geq 0$.
\begin{eqnarray*}
a^j(t) &=& \big[-(k-1)e^{-it}+k- e^{it}\big]^j=  (1-e^{-it})^j \;[(k-1)-e^{it}]^j\\
       &=& \left[ \sum_{m=0}^j{j \choose m}(-1)^{j-m}\;  e^{-i(j-m)t}\right]
          \left[ \sum_{l=0}^j{j \choose m}(-1)^{j-m}\;(k-1)^l\; e^{i(j-l)t}\right]
       = \sum_{s=-j}^j \alpha_{s}^j \; e^{ist},
\end{eqnarray*}
where
\begin{equation}\label{eqn:coef-aj}
\alpha_{s}^j= \left\{\begin{array}{ll}
                   (-1)^s \ds\sum_{l=0}^{j-s}{j \choose l+s} {j \choose l}(k-1)^l,          & \mbox{if}\ s\geq 0, \\
                   (-1)^{-s} \ds\sum_{l=0}^{j+s}{j \choose l-s} {j \choose l}(k-1)^{l-s },   &  \mbox{if}\ s<0.
               \end{array}\right.
\end{equation}

Now we state and prove the combinatorial heat equation on $\tree$.
\begin{theorem}\label{thm:tree-heat-final}
Let $\mathbf T$ be a $k$-regular tree and let us denote $\beta_s=\dfrac{1+(k-1)^s}{k(k-1)^{s-1}}$.
If   $f\in L^2(V(\mathbf T))$ then
the combinatorial heat equation (\ref{eqn:heat-graph}) on $\tree$
has a  unique solution
$\ds {u(x,n)=\sum_{s=0}^{n}\biggl(w_s^{(n)}\sum_{d(x,y)=s}f(y) \biggr)}$, where
\begin{equation*}
w_s^{(n)}= \left\{\begin{array}{ll}{\ds \beta_s
\sum_{l \ge 0} \biggl[ \sum_{j\geq l} (-1)^j {n \choose j+s} {j+s \choose l}{j+s \choose l+s} \biggr](k-1)^l}, & \mbox{ if }\ s>0, \\
             {\ds  \sum_{l\ge 1} \biggl[\sum_{j\geq l}(-1)^j {n \choose j}{j \choose l}^2 \biggr](k-1)^l}, &  \mbox{ if }\  s=0.\\
                   \end{array}\right.
\end{equation*}
\end{theorem}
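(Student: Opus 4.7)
The plan is to reduce the heat equation on $\tree$ to the initial-value problem~(\ref{eqn:convert-heat11}) on the path $\path$ by spherical averaging, and then to read off the explicit coefficients from the Fourier inversion supplied by Lemma~\ref{lem:heat-con-tree}. Fix $x \in V(\tree)$ and suppose $u$ solves~(\ref{eqn:heat-graph}) on $\tree$. By Corollary~\ref{cor:wave-convert}(1), the spherical mean $\widetilde{u}(r,n) := M_u(x,r,n)$ satisfies~(\ref{eqn:convert-heat11}) with initial datum $\widetilde{f}(r) = M_f(x,r)$, so Lemma~\ref{lem:heat-con-tree} gives $\widetilde{u}(r,n) = K_n*\widetilde{f}(r)$. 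Since $S(0)=1$ forces $M_u(x,0,n)=u(x,n)$, I obtain
$$u(x,n) = (K_n * \widetilde{f})(0) = \sum_{r \in \Z} K_n(r)\,\widetilde{f}(-r) = \sum_{r \in \Z} K_n(r)\,\widetilde{f}(r),$$
where in the last equality I used that $\widetilde{f}$ was extended as an even function (see the paragraph preceding Lemma~\ref{lem:mu-l1}). This yields uniqueness; existence will follow a posteriori by observing that the resulting closed form satisfies the recursion $u(x,n+1)=(1-k)u(x,n)+\sum_{y\sim x}u(y,n)$ built into~(\ref{eqn:heat-graph}).

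Next, I unpack $K_n$. By Remark~\ref{rem:four1} applied to the trigonometric polynomial $(1-a(t))^n=\sum_{j=0}^n \binom{n}{j}(-a(t))^j$, together with the expansion $a^j(t)=\sum_{s=-j}^{j}\alpha_s^j e^{ist}$ from~(\ref{eqn:coef-aj}), I read off
$$K_n(r)=\sum_{j=|r|}^{n}(-1)^j\binom{n}{j}\alpha_r^j \quad \text{for } |r|\le n,$$
and $K_n(r)=0$ otherwise. Splitting the sum in the previous display into its $r=0$ piece and pairing $r=s$ with $r=-s$ for $s\ge 1$, and then substituting $\widetilde{f}(s)=\frac{1}{S(s)}\sum_{d(x,y)=s}f(y)$ with $S(s)=k(k-1)^{s-1}$, reduces the theorem to the identifications $w_0^{(n)}=K_n(0)$ and $w_s^{(n)}=(K_n(s)+K_n(-s))/S(s)$ for $s\ge 1$.

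It remains to match the stated formulas. A direct computation from~(\ref{eqn:coef-aj}) shows that for $s\ge 1$
$$\alpha_s^j+\alpha_{-s}^j=(-1)^s\bigl(1+(k-1)^s\bigr)\sum_{l=0}^{j-s}\binom{j}{l+s}\binom{j}{l}(k-1)^l,$$
so the factor $\tfrac{1+(k-1)^s}{S(s)}=\beta_s$ pops out. A shift $j\mapsto j+s$ cancels the sign via $(-1)^{j+s}(-1)^s=(-1)^j$, and interchanging the order of the $j$- and $l$-summations gives exactly the formula for $w_s^{(n)}$ claimed (the upper bound $j\le n-s$ is absorbed because $\binom{n}{j+s}$ already vanishes beyond it). For $s=0$, the same inversion yields $K_n(0)=\sum_{l\ge 0}(k-1)^l\sum_{j\ge l}(-1)^j\binom{n}{j}\binom{j}{l}^2$; the $l=0$ term collapses to $\sum_{j=0}^n(-1)^j\binom{n}{j}=0$ for $n\ge 1$, which is why the stated sum starts at $l=1$. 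The only real obstacle is bookkeeping these nested alternating sums; the conceptual content has already been supplied by the spherical-mean identity of Theorem~\ref{thm:lap-convert} and the Fourier analysis on $\Z$ embedded in Lemma~\ref{lem:heat-con-tree}.
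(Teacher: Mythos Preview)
Your proposal is correct and follows essentially the same route as the paper: reduce to the path via spherical means (Corollary~\ref{cor:wave-convert}), invoke Lemma~\ref{lem:heat-con-tree} to get $u(x,n)=K_n*\widetilde f(0)$, split by evenness of $\widetilde f$, and then read off the coefficients from~(\ref{eqn:coef-aj}) with the index shift $j\mapsto j+s$. Your explicit remark that existence follows from the recursion built into~(\ref{eqn:heat-graph}) is a welcome clarification that the paper leaves implicit.
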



\begin{proof}
Using (\ref{eqn:convert-sol}), for a fixed $x\in V(\tree)$, we obtain
$\widetilde{u}(0,n)=M_u(x,0,n)=u(x,n)$. In view of
Corollary~\ref{cor:wave-convert} and Lemma~\ref{lem:heat-con-tree}, the
solution to the combinatorial heat equation on $\tree$ is
\begin{equation*}
u(x,n)=\widetilde{u}(0,n)=K_n*\widetilde{f}(0).
\end{equation*}
Since $\widetilde{f}(s)=\widetilde{f}(-s)$ for all $s\in \Z$,  we have
\begin{eqnarray*}
K_n*\widetilde{f}(0) =  \sum_{s\in \Z} \widetilde{f}(s)K_n(-s)
       =  K_n(0)\widetilde{f}(0)+ \sum_{s=1}^\infty[K_n(s)+ K_n(-s)]  \widetilde{f}(s).
\end{eqnarray*}
Further, if $a^j(t)=\ds \sum_{s=-j}^j \alpha_{s}^j \; e^{ist}$ then $ K_n(s)=\ds\sum_{j=0}^{n}{n \choose j}(-1)^j \mathfrak{F}^{-1}(a^j)(s)$.
Thus,  using Remark~\ref{rem:four1}, we get
\begin{equation*}
 K_n(s)=\left\{\begin{array}{ll}\ds\sum_{j\geq s}^{n}{n \choose j}(-1)^j \alpha_s^j
 , & \mbox{ if }\ -n\leq s\leq n, \\
              {\ds 0}, &  \mbox{ otherwise }.\\
               \end{array}\right.
\end{equation*}
Therefore, by (\ref{eqn:coef-aj}),
for $0<s\leq n$, we have
\begin{eqnarray*}
K_n(s)+K_n(-s)\!\! \! &=&\!\! \!  \sum_{j\geq s}^{n}{n \choose j}(-1)^j (\alpha_s^j+\alpha_{-s}^j)\\
              &=& \!\! \! \sum_{j\geq s}^{n}{n \choose j}(-1)^{j+s}
                      \sum_{l=0}^{j-s}{j \choose l+s} {j \choose l}(k-1)^l[1+(k-1)^s]\\
              &=& \!\! \! [1+(k-1)^s]  \sum_{l \ge 0}
                     \biggl[ \sum_{j\geq l} (-1)^j {n \choose j+s} {j+s \choose l}{j+s \choose l+s}
                            \biggr](k-1)^l,            
\end{eqnarray*}
and for $s=0$ with the convention  ${0 \choose 0}=1$ , we have
\begin{eqnarray*}
K_n(0)\!\! \!  &=& \!\! \!  \sum_{j= 0}^{n}{n \choose j}(-1)^j \alpha_0^j
              =  \sum_{j\geq 0}^{n}{n \choose j}(-1)^j \sum_{l=0}^j
                              {j \choose l}^2 (k-1)^l=\sum_{l\ge 0} \biggl[\sum_{j\geq l}(-1)^j {n \choose j}{j \choose l}^2 \biggr](k-1)^l\\
              &=& \!\! \!  \sum_{j\geq 0}(-1)^j {n \choose j} + \sum_{l\ge 1} \biggl[\sum_{j\geq l}(-1)^j {n \choose j}{j \choose l}^2 \biggr](k-1)^l
               =\sum_{l\ge 1} \biggl[\sum_{j\geq l}(-1)^j {n \choose j}{j \choose l}^2 \biggr](k-1)^l
\end{eqnarray*}

Therefore,
\begin{eqnarray*}
 K_n\hspace{-.15in}&*&\hspace{-.15in}\widetilde{f}(0) \\
&=& \hspace{-.02in} \biggl[ \sum_{l\ge 1} \biggl[\sum_{j\geq l}(-1)^j {n \choose j}{j \choose l}^2 \biggr](k-1)^l\biggr]\widetilde{f}(0)\\
     && \hspace{-.02in} +\ \sum_{s=1}^n \biggl[[1+(k-1)^s]  \sum_{l \ge 0}
                     \biggl[ \sum_{j\geq l} (-1)^j {n \choose j+s} {j+s \choose l}{j+s \choose l+s}
                            \biggr](k-1)^l\biggr]\widetilde{f}(s)\\
       &=& \hspace{-.02in} \biggl[ \sum_{l\ge 1} \biggl[\sum_{j\geq l}(-1)^j {n \choose j}{j \choose l}^2 \biggr](k-1)^l\biggr]f(x)\\
       && \hspace{-.02in} + \sum_{s=1}^n \biggl[[1+(k-1)^s]  \sum_{l \ge 0}
                     \biggl[ \sum_{j\geq l} (-1)^j {n \choose j+s} {j+s \choose l}{j+s \choose l+s}
                            \biggr](k-1)^l\biggr] \!\!\cdot \! \biggl[\frac{1}{S(s)} \sum_{d(x,y)=s}f(y)\biggr] \\
        &=& \hspace{-.02in} \sum_{s=0}^{n}\biggl(w_s^{(n)}\sum_{d(x,y)=s}f(y) \biggr).
\end{eqnarray*}
Hence the desired result follows.
\end{proof}

Before proceeding further, we state and prove a property of the solution of (\ref{eqn:convert-wave1}) that will be useful for solving the combinatorial wave equation on $\tree$.

\begin{lem}\label{lem:sol-wave-convert-u}
Let $f, g\in L^1(V(\tree))$ and let  (\ref{eqn:convert-wave1}) admit a
solution $\widetilde{u}(r,n)$. Then $\widetilde{u}(\cdot,n) \in L^1(V(\path))$ for all $n\in \Z_+$.
\end{lem}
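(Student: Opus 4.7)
The proof will mirror the strategy used for Lemma \ref{lem:sol-wave-u} and Lemma \ref{lem:sol-heat-convert-u}, namely an induction on the discrete time variable $n$ combined with the triangle inequality applied to a suitable recurrence form of the wave equation on $\path$.

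The first step is to recast (\ref{eqn:convert-wave1}) as an explicit two-step recurrence in $n$. Using the identity (\ref{eqn:lap2}) (applied to the function $r \mapsto \widetilde{u}(r,n)$ on the path) together with $\partial_n^2 \widetilde{u}(r,n) = \widetilde{u}(r,n+2) - 2\widetilde{u}(r,n+1) + \widetilde{u}(r,n)$, the equation becomes
\begin{equation*}
\widetilde{u}(r,n+2) = 2\widetilde{u}(r,n+1) - (k+1)\widetilde{u}(r,n) + (k-1)\widetilde{u}(r+1,n) + \widetilde{u}(r-1,n),
\end{equation*}
with initial data $\widetilde{u}(r,0)=\widetilde{f}(r)$ and $\widetilde{u}(r,1)=\widetilde{f}(r)+\widetilde{g}(r)$.

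Next, I need to anchor the induction. Since $f,g \in L^{1}(V(\tree))$, Lemma \ref{lem:mu-l1} (with $p=1$) yields $\widetilde{f},\widetilde{g}\in L^{1}(V(\path))$, so $\widetilde{u}(\cdot,0)=\widetilde{f}$ and $\widetilde{u}(\cdot,1)=\widetilde{f}+\widetilde{g}$ both belong to $L^{1}(V(\path))$. This handles the base cases $n=0,1$.

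For the inductive step, I will assume $\widetilde{u}(\cdot,t)\in L^{1}(V(\path))$ for all $t\leq n+1$ and estimate $\|\widetilde{u}(\cdot,n+2)\|_{L^{1}(V(\path))}$. Applying the triangle inequality to the recurrence, and using the translation invariance of $\|\cdot\|_{L^{1}(V(\path))}$ (so that $\sum_{r\in\Z}|\widetilde{u}(r\pm 1,n)| = \|\widetilde{u}(\cdot,n)\|_{L^{1}(V(\path))}$), gives
\begin{equation*}
\|\widetilde{u}(\cdot,n+2)\|_{L^{1}(V(\path))} \;\leq\; 2\|\widetilde{u}(\cdot,n+1)\|_{L^{1}(V(\path))} + (2k+1)\|\widetilde{u}(\cdot,n)\|_{L^{1}(V(\path))} < \infty,
\end{equation*}
by the induction hypothesis. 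The conclusion then follows by the principle of mathematical induction.

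I do not expect any serious obstacle: the only non-routine ingredient is using Lemma \ref{lem:mu-l1} at the base case to pass from $L^{1}(V(\tree))$ for $f,g$ to $L^{1}(V(\path))$ for $\widetilde{f},\widetilde{g}$, and the translation invariance in the inductive step. The proof is essentially the $L^{1}$ analogue, on the path, of Lemma \ref{lem:sol-wave-u}.
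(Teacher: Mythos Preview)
Your proposal is correct and follows essentially the same approach as the paper: rewrite (\ref{eqn:convert-wave1}) via (\ref{eqn:lap2}) as the two-step recurrence, invoke Lemma~\ref{lem:mu-l1} to get $\widetilde{f},\widetilde{g}\in L^{1}(V(\path))$ for the base cases, and then induct on $n$. In fact you have spelled out the inductive estimate explicitly, whereas the paper simply asserts that the result follows by induction.
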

\begin{proof}
Using  (\ref{eqn:lap2}), we can re-write (\ref{eqn:convert-wave1}) as
\begin{equation}\label{eqn:rec-wave:2}
\begin{split}
 \widetilde{u}&(r, n+2)= 2\widetilde{u}(r, n+1)-(k+1)\widetilde{u}(r,n)
  +(k-1) \widetilde{u}(r+1, n)+\widetilde{u}(r-1,n), \\
 \widetilde{u}&(r,0)=\widetilde{f}(r)  \; {\mbox{ and }} \;
 \widetilde{u}(r,1)=\widetilde{u}(r,0)+\widetilde{g}(r).
\end{split}
\end{equation}
 As $f, g\in L^1(V(\tree))$, using Lemma~\ref{lem:mu-l1},  we have $\widetilde{f}, \widetilde{g}\in L^1{(V(\path))}$.
Therefore, by applying  mathematical induction on discrete time variable $n$,
 the desired result follows.
\end{proof}

We  now solve the initial value problem (\ref{eqn:convert-wave1}) and
also obtain a necessary and sufficient condition for the existence of the solution.
\begin{lem}\label{lem:wave-con-tree}
Let $f,g\in L^1(V(\tree))$.  Then, the initial value problem (\ref{eqn:convert-wave1})
admits a unique solution if and only if  $\widehat{\widetilde{g}}(0)=0$. In case the
solution exists, it is unique and is expressed by
$$\widetilde{u}(r,n)=F_n*\widetilde{f}(r)+G_n*\widetilde{g}(r),$$
where $F_n(r)= \mathfrak{F^{-1}}\biggl( \sum_{j=0}^{[\frac{n}{2}]} {n \choose 2j}  (-a)^j\biggr)(r)$
and $G_n(r)= \mathfrak{F^{-1}}\biggl( \sum_{j=0}^{[\frac{n-1}{2}]}{n \choose 2j+1}  (-a)^j\biggr)(r)$
 with $a(t)=-(k-1)e^{-it}+k- e^{it}$.
\end{lem}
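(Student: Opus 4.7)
The plan is to mirror the approach used in Theorem~\ref{thm:wave11} but now working on the Fourier side of the path $\path$ (i.e.\ on $\T$) rather than on the dual of a general discrete abelian group. First I would use~(\ref{eqn:lap2}) to rewrite~(\ref{eqn:convert-wave1}) as the second order linear recurrence~(\ref{eqn:rec-wave:2}). By Lemma~\ref{lem:mu-l1} the initial data $\widetilde{f},\widetilde{g}$ lie in $L^1(V(\path))$, and by Lemma~\ref{lem:sol-wave-convert-u} any solution $\widetilde{u}(\cdot ,n)$ lies in $L^1(V(\path))$ for every $n\in\Z_+$, so all Fourier transforms with respect to $r\in\Z$ are continuous on $\T$ and the manipulations below are justified point-wise.

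Applying the $\Z$-Fourier transform to~(\ref{eqn:rec-wave:2}) and using Proposition~\ref{prop:fourier-dis-gr}, the spatial operator becomes multiplication by $a(t)=-(k-1)e^{-it}+k-e^{it}$, and the IVP transforms into
\begin{equation*}
\widehat{\widetilde{u}}(t,n+2)-2\widehat{\widetilde{u}}(t,n+1)+(1+a(t))\widehat{\widetilde{u}}(t,n)=0,\qquad
\widehat{\widetilde{u}}(t,0)=\widehat{\widetilde{f}}(t),\ \widehat{\widetilde{u}}(t,1)-\widehat{\widetilde{u}}(t,0)=\widehat{\widetilde{g}}(t).
\end{equation*}
The characteristic roots are $1\pm\sqrt{-a(t)}$, so the general solution is $\widehat{\widetilde{u}}(t,n)=\lambda_t(1+\sqrt{-a(t)})^n+\mu_t(1-\sqrt{-a(t)})^n$, and the initial conditions give the system $\lambda_t+\mu_t=\widehat{\widetilde{f}}(t)$ and $(\lambda_t-\mu_t)\sqrt{-a(t)}=\widehat{\widetilde{g}}(t)$, exactly as in~(\ref{eqn:wave-initial1}).

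The consistency analysis is the key point. A direct computation shows $a(0)=-(k-1)+k-1=0$, so at the trivial character $\gamma_0$ (i.e.\ $t=0$) the second equation collapses to $0=\widehat{\widetilde{g}}(0)$. For every other $t\in(0,2\pi)$ one has $a(t)\neq 0$, so the $2\times 2$ system is uniquely solvable. Hence a solution exists if and only if $\widehat{\widetilde{g}}(0)=0$, and in that case $\lambda_t,\mu_t$ are uniquely determined. Using the binomial identities $(1+x)^n+(1-x)^n=2\sum_{i=0}^{[n/2]}\binom{n}{2i}x^{2i}$ and $(1+x)^n-(1-x)^n=2x\sum_{i=0}^{[(n-1)/2]}\binom{n}{2i+1}x^{2i}$ with $x=\sqrt{-a(t)}$ eliminates the ambiguous square root, yielding
\begin{equation*}
\widehat{\widetilde{u}}(t,n)=\biggl[\sum_{i=0}^{[n/2]}\binom{n}{2i}(-a(t))^{i}\biggr]\widehat{\widetilde{f}}(t)+\biggl[\sum_{i=0}^{[(n-1)/2]}\binom{n}{2i+1}(-a(t))^{i}\biggr]\widehat{\widetilde{g}}(t).
\end{equation*}

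Finally, since $a$ is a trigonometric polynomial and $\widehat{\widetilde{f}},\widehat{\widetilde{g}}$ are continuous on the compact group $\T$, the right-hand side is continuous, hence in $L^1(\T)$; applying the inverse Fourier transform and the convolution rule of Proposition~\ref{prop:fourier-dis-gr} yields $\widetilde{u}(r,n)=F_n*\widetilde{f}(r)+G_n*\widetilde{g}(r)$ with $F_n$ and $G_n$ as stated, and uniqueness of Fourier inversion gives uniqueness of $\widetilde{u}$. I expect the main delicate point to be the treatment of the branch $\sqrt{-a(t)}$ at $t=0$, where both characteristic roots coalesce to $1$; the binomial re-expansion above is the clean device that avoids having to choose a branch and simultaneously exposes $\widehat{\widetilde{g}}(0)=0$ as the precise compatibility condition.
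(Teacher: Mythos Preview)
Your proposal is correct and follows essentially the same route as the paper: rewrite via~(\ref{eqn:lap2}), invoke Lemmas~\ref{lem:mu-l1} and~\ref{lem:sol-wave-convert-u} to justify taking the $\Z$-Fourier transform, obtain the same second-order recurrence in $n$ with symbol $a(t)=-(k-1)e^{-it}+k-e^{it}$, and then argue exactly as in Theorem~\ref{thm:wave11}. The paper's proof simply says ``proceeding similar to the proof of Theorem~\ref{thm:wave11}'' at the point where you spell out the characteristic-root analysis, the consistency check at $t=0$, and the binomial re-expansion, so your write-up is a faithful (and more explicit) version of the same argument.
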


\begin{proof}
Using  (\ref{eqn:lap2}), we can re-write (\ref{eqn:convert-wave1}) as \begin{equation*}
\begin{split}
 k \widetilde{u}&(r,n)-(k-1)\widetilde{u}(r+1,n)- \widetilde{u}(r-1,n)
  + \widetilde{u}(r,n+2)-2 \widetilde{u}(r,n+1)+ \widetilde{u}(r,n)=0,\\
 \widetilde{u}&(r,0)=\widetilde{f}(r)  \; {\mbox{ and }} \;
                                    \widetilde{u}(r,1)-\widetilde{u}(r,0)=\widetilde{g}(r).
\end{split}
\end{equation*}
Since $\widetilde{f}, \widetilde{g}\in L^1(V(\path))$,  by Lemma~\ref{lem:sol-wave-convert-u},
$\widetilde{u}(\cdot,n) \in L^1(V(\path))$ for all $n\in \Z_+$. Thus, taking the
Fourier transform on both sides with respect to the variable $r\in \Z=V(\path)$, we get
\begin{equation*}
\begin{split}
 k \widehat{\widetilde{u}}&(t,n)-(k-1)e^{-it}\widehat{\widetilde{u}}(t,n)- e^{it}\widehat{\widetilde{u}}(t,n)
     + \widehat{\widetilde{u}}(t,n+2)-2 \widehat{\widetilde{u}}(t,n+1)+ \widehat{\widetilde{u}}(t,n)=0,\\
 \widehat{\widetilde{u}}&(t,0)=\widehat{\widetilde{f}}(t)  \; {\mbox{ and }} \;
                  \widehat{ \widetilde{u}}(t,1)-\widehat{\widetilde{u}}(t,0)=\widehat{\widetilde{g}}(t).
\end{split}
\end{equation*}
By Remark~\ref{rem:hat-f-cont},  $\widehat{\widetilde{u}}(t, n)$ for all $n\in \Z_+$, is a continuous function  on the unit circle $\T$ with respect to the variable $t$. Hence, the point-wise calculation of the above equations
is well defined. Now, re-writing the above equations, we have
\begin{equation*}
\begin{split}
\widehat{\widetilde{u}}&(t,n+2)-2\widehat{\widetilde{u}}(t,n+1)
  - \big[(-a)(t)-1\big]\widehat{\widetilde{u}}(t,n)=0,\\
 \widehat{\widetilde{u}}&(t,0)=\widehat{\widetilde{f}}(t)  \; {\mbox{ and }} \;  \widehat{ \widetilde{u}}(t,1)-\widehat{\widetilde{u}}(t,0)=\widehat{\widetilde{g}}(t),
\end{split}
\end{equation*}
where $a(t)=-(k-1)e^{-it}+k- e^{it}$. Note that, the above recurrence
relation (with respect to the variable $n\in \Z_+$) is  of a form
that is same as the recurrence relation  that appeared in Theorem~\ref{thm:wave11}. Since,  $\Z$ is  also a finitely
generated discrete abelian group  thus, proceeding similar to the proof of Theorem~\ref{thm:wave11}, we obtain the
desired result.
\end{proof}

Now we state and prove the combinatorial wave equation on $\tree$. The ideas and calculations of the proof of the next theorem are
similar to the proof of  Theorem~\ref{thm:tree-heat-final}, but we provide the same, for the sake of completion.
\begin{theorem}\label{thm:tree-wave-final}
Let $\mathbf T$ be a $k$-regular tree.  Then, for $f,g\in L^1(V(\mathbf T))$,
the combinatorial wave equation (\ref{eqn:wave-graph}) on $\tree$
has a  solution if and only if $\widehat{\widetilde{g}}(0)=0$.
Moreover, the solution is unique and if we denote  $\beta_s=\dfrac{1+(k-1)^s}{k(k-1)^{s-1}}$, then
$$u(x,n)=\sum_{s=0}^{[\frac{n}{2}]}\biggl(w_s^{(1,n)}\sum_{d(x,y)=s}f(y) \biggr)+
\sum_{s=0}^{[\frac{n-1}{2}]}\biggl(w_s^{(2,n)}\sum_{d(x,y)=s}g(y)\biggr),$$
where
\begin{equation*}
w_s^{(1,n)}= \left\{\begin{array}{ll}{\ds \beta_s
                     \sum_{l \ge 0}
                     \biggl[ \sum_{j\geq l} (-1)^j {n \choose 2(j+s)} {j+s \choose l}{j+s \choose l+s}
                            \biggr](k-1)^l}, & \mbox{ if }\ s>0, \\
             {\ds  \sum_{j\geq 0}(-1)^j {n \choose 2j} + \sum_{l\ge 1} \biggl[\sum_{j\geq l}(-1)^j {n \choose 2j}{j \choose l}^2 \biggr](k-1)^l}, &  \mbox{ if }\  s=0,\\
                   \end{array}\right.
\end{equation*}
and
\begin{equation*}
w_s^{(2,n)}= \left\{\begin{array}{ll}{\ds \beta_s
\sum_{l \ge 0}
                     \biggl[ \sum_{j\geq l} (-1)^j {n \choose 2(j+s)+1} {j+s \choose l}{j+s \choose l+s}
                            \biggr](k-1)^l}, & \mbox{ if }\ s>0, \\
              {\ds \sum_{j\geq 0}(-1)^j {n \choose 2j+1} + \sum_{l\ge 1} \biggl[\sum_{j\geq l}(-1)^j {n \choose 2j+1}{j \choose l}^2 \biggr](k-1)^l}, &  \mbox{ if }\  s=0.\\
               \end{array}\right.
\end{equation*}
\end{theorem}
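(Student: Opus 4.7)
The plan is to mirror the proof of Theorem~\ref{thm:tree-heat-final} almost verbatim, replacing the single kernel $K_n$ with the two kernels $F_n$ and $G_n$ coming from the wave equation on the path $\path$.

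First, I would invoke Corollary~\ref{cor:wave-convert}: a function $u$ solves the combinatorial wave equation~(\ref{eqn:wave-graph}) on $\tree$ if and only if its spherical mean $M_u(x,r,n)$, viewed as $\widetilde{u}(r,n)$ for each fixed $x\in V(\tree)$, solves the IVP~(\ref{eqn:convert-wave1}) on $\path$ with initial data $\widetilde{f}(r)=M_f(x,r)$ and $\widetilde{g}(r)=M_g(x,r)$. By Lemma~\ref{lem:mu-l1}, $\widetilde{f},\widetilde{g}\in L^1(V(\path))$, so Lemma~\ref{lem:wave-con-tree} applies: the IVP admits a unique solution iff $\widehat{\widetilde{g}}(0)=0$, and this solution has the form $\widetilde{u}(r,n)=F_n*\widetilde{f}(r)+G_n*\widetilde{g}(r)$. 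Evaluating at $r=0$ and using $\widetilde{u}(0,n)=M_u(x,0,n)=u(x,n)$ gives
\begin{equation*}
u(x,n)=F_n*\widetilde{f}(0)+G_n*\widetilde{g}(0).
\end{equation*}

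Next, exactly as in the proof of Theorem~\ref{thm:tree-heat-final}, I would exploit the evenness $\widetilde{f}(s)=\widetilde{f}(-s)$ and $\widetilde{g}(s)=\widetilde{g}(-s)$ to rewrite
\begin{equation*}
F_n*\widetilde{f}(0)=F_n(0)\widetilde{f}(0)+\sum_{s\geq 1}\bigl[F_n(s)+F_n(-s)\bigr]\widetilde{f}(s),
\end{equation*}
and similarly for the $G_n$ term. Using Remark~\ref{rem:four1} together with the already-computed expansion $a^j(t)=\sum_{s=-j}^{j}\alpha_s^j e^{ist}$ with the $\alpha_s^j$ given by~(\ref{eqn:coef-aj}), I would obtain
\begin{equation*}
F_n(s)=\sum_{j\geq s}\binom{n}{2j}(-1)^j\alpha_s^j,\qquad G_n(s)=\sum_{j\geq s}\binom{n}{2j+1}(-1)^j\alpha_s^j
\end{equation*}
for $|s|\leq n$, and zero otherwise. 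The symmetrization $\alpha_s^j+\alpha_{-s}^j$ is the same computation already carried out in Theorem~\ref{thm:tree-heat-final}: it produces the factor $[1+(k-1)^s]$ together with the inner double sum over $l$ and $j$. Thus $F_n(s)+F_n(-s)$ and $G_n(s)+G_n(-s)$ immediately yield, after dividing and multiplying by $S(s)=k(k-1)^{s-1}$, the coefficients $w_s^{(1,n)}$ and $w_s^{(2,n)}$ multiplying $\frac{1}{S(s)}\sum_{d(x,y)=s}f(y)$ and $\frac{1}{S(s)}\sum_{d(x,y)=s}g(y)$ respectively. The case $s=0$ is handled as in Theorem~\ref{thm:tree-heat-final}, using $\binom{0}{0}=1$ and separating the $l=0$ term from $l\geq 1$.

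Finally, the truncation of the outer $s$-sum at $[n/2]$ for $f$ and at $[(n-1)/2]$ for $g$ comes automatically: $F_n$ involves $\binom{n}{2j}$ which forces $j+s\leq [n/2]$ once one reindexes $j\mapsto j+s$, and analogously for $G_n$ with $\binom{n}{2j+1}$. The uniqueness part follows from uniqueness in Lemma~\ref{lem:wave-con-tree} together with the bijective correspondence between $u$ and $\widetilde{u}$ via the spherical mean at a fixed $x$. The main obstacle, as in the heat case, is purely bookkeeping: keeping the three nested summations (over $s$, over $j$, over $l$) correctly reindexed after symmetrization, and verifying that the $[1+(k-1)^s]/S(s)=\beta_s$ identity drops out cleanly. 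No genuinely new analytic input is needed beyond Corollary~\ref{cor:wave-convert}, Lemma~\ref{lem:wave-con-tree}, and the combinatorial identities already established for $a^j(t)$.
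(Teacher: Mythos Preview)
Your proposal is correct and follows essentially the same approach as the paper's own proof: reduce to the path IVP via Corollary~\ref{cor:wave-convert} and Lemma~\ref{lem:wave-con-tree}, evaluate the convolutions $F_n*\widetilde{f}$ and $G_n*\widetilde{g}$ at $r=0$, exploit the evenness of $\widetilde{f}$ and $\widetilde{g}$ to symmetrize, and then read off the coefficients from the expansion of $a^j(t)$ via~(\ref{eqn:coef-aj}) exactly as in Theorem~\ref{thm:tree-heat-final}. The paper itself prefaces the proof by noting that the argument parallels that of Theorem~\ref{thm:tree-heat-final}, just as you anticipated; the only minor slip in your outline is the range ``$|s|\le n$'' for the support of $F_n(s)$, which should be $|s|\le [n/2]$, but you correct this implicitly in your truncation remark.
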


\begin{proof}
Using (\ref{eqn:convert-sol}), for a fixed $x\in V(\tree)$, we obtain
$\widetilde{u}(0,n)=M_u(x,0,n)=u(x,n)$. In view of
Corollary~\ref{cor:wave-convert} and Lemma~\ref{lem:wave-con-tree}, the
solution to the combinatorial wave equation on $\tree$ is given by
\begin{equation*}
u(x,n)=\widetilde{u}(0,n)=F_n*\widetilde{f}(0)+G_n*\widetilde{g}(0).
\end{equation*}
Since $\widetilde{f}(s)=\widetilde{f}(-s)$ for all $s\in \Z$,  we have
\begin{eqnarray}\label{eqn:eq-fn}
F_n*\widetilde{f}(0) =  \sum_{s\in \Z} \widetilde{f}(s)F_n(-s)
       =  F_n(0)\widetilde{f}(0)+ \sum_{s=1}^\infty[F_n(s)+F_n(-s)]  \widetilde{f}(s).
\end{eqnarray}
Further, if $a^j(t)=\!\ds\sum_{s=-j}^j \alpha_{s}^j \; e^{ist}$ then
$ F_n(s)=\!\ds\sum_{j=0}^{[\frac{n}{2}]}{n \choose 2j}(-1)^j \mathfrak{F}^{-1}(a^j)(s)$.
Thus,  using Remark~\ref{rem:four1}, we get
\begin{equation*}
F_n(s)= \left\{\begin{array}{ll}{\ds \sum_{j\geq s}^{[\frac{n}{2}]}{n \choose 2j}(-1)^j \alpha_s^j}
 , & \mbox{ if }\ -[\frac{n}{2}]\leq s\leq [\frac{n}{2}], \\
              {\ds 0}, &  \mbox{ if }\  s=0.\\
               \end{array}\right.
\end{equation*}

Therefore, for $s>0$, using (\ref{eqn:coef-aj})  we have
\begin{eqnarray*}
F_n(s)+F_n(-s)&=& \sum_{j\geq s}^{[\frac{n}{2}]}{n \choose 2j}(-1)^j (\alpha_s^j+\alpha_{-s}^j)\\
              &=& \sum_{j\geq s}^{[\frac{n}{2}]}{n \choose 2j}(-1)^{j+s}
                      \sum_{l=0}^{j-s}{j \choose l+s} {j \choose l}(k-1)^l[1+(k-1)^s]\\
               &=& [1+(k-1)^s]  \sum_{l \ge 0}
                     \biggl[ \sum_{j\geq l} (-1)^j {n \choose 2(j+s)} {j+s \choose l}{j+s \choose l+s}
                            \biggr](k-1)^l,
\end{eqnarray*}
and for $s=0$ with the convention  ${0 \choose 0}=1$, we have
\begin{eqnarray*}
F_n(0)  &=& \sum_{j= 0}^{[\frac{n}{2}]}{n \choose 2j}(-1)^j \alpha_0^j
              =  \sum_{j\geq 0}^{[\frac{n}{2}]}{n \choose 2j}(-1)^j \sum_{l=0}^j
                              {j \choose l}^2 (k-1)^l\\
              &=& \sum_{j\geq 0}(-1)^j {n \choose 2j}+\sum_{l\ge 1} \biggl[\sum_{j\geq l}(-1)^j {n \choose 2j}{j \choose l}^2 \biggr](k-1)^l.
\end{eqnarray*}
Thus,  (\ref{eqn:eq-fn}) reduces to
$ \ds F_n*\widetilde{f}(0)=\sum_{s=0}^{[\frac{n}{2}]}\biggl(w_s^{(1,n)}\sum_{d(x,y)=s}f(y) \biggr).$
A similar calculation leads us to get
$ \ds
G_n*\widetilde{f}(0)=\sum_{s=0}^{[\frac{n-1}{2}]}\biggl(w_s^{(2,n)}\sum_{d(x,y)=s}g(y) \biggr).$
Hence the desired result follows.
\end{proof}

We conclude this section with the following observation:
Let $x_0$ be a fixed but arbitrary reference point in $V(\tree)$. Let $\Gamma= Aut(\tree)$ be
the automorphism group of $\tree$ and $H= \{ \varphi \in Aut(\tree): \varphi(x_0)=x_0 \}$
be the stabilizer of $x_0$. Since $\tree$ is vertex-transitive, so the orbit of $x_0$, namely
$\{\varphi(x_0): \varphi \in Aut(\tree)\}$ is equal to $V(\tree)$. By the Orbit-Stabilizer
theorem,  we can identify $V(\tree)$ with $\Gamma/H$. If we choose
$S=\{\varphi \in \Gamma: \varphi(x_0)\sim x_0\}$,  it can be verified that
$\tree$ can also be identified with the coset graph $G=Coset(\Gamma,H, S)$. It is
interesting to observe that in this case, the subgroup  $H$ is an infinite subgroup
of the non-abelian group~$\Gamma$. Therefore, in this section, we have solved the heat
and wave equations on a class of graphs which can be identified with Cayley graphs
as well as coset graphs, whenever the associated group is a non-abelian group.



\end{document}